\numberwithin{figure}{section}
\numberwithin{table}{section}
\newcommand{\R}{\mathbb{R}}
\newcommand{\Rn}{\R^n}
\newtheorem{thm}{Theorem}[section]
\newtheorem{Def}[thm]{Definition}
\newtheorem{lem}[thm]{Lemma}
\newtheorem{cor}[thm]{Corollary}
\newtheorem{assum}[thm]{Assumptions}
\numberwithin{equation}{section}
\newcommand{\comment}[1]{}
\newcommand{\be}{\begin{equation}}
\newcommand{\ee}{\end{equation}}
\newcommand{\bea}{\begin{eqnarray}}
\newcommand{\eea}{\end{eqnarray}}
\newcommand{\beqa}{\begin{eqnarray}}
\newcommand{\eeqa}{\end{eqnarray}}
\newcommand{\beann}{\begin{eqnarray*}}
\newcommand{\eeann}{\end{eqnarray*}}
\newcommand{\bmat}{\left[ \begin{array}}
\newcommand{\emat}{\end{array} \right]}
\newcommand{\beq}{\begin{equation}}
\newcommand{\eeq}{\end{equation}}
\newcommand{\bproof}{\begin{description} \item[{\it Proof}.] ~ }
\newcommand{\eproof}{\hspace*{\fill}$\Box$\medskip \end{description}}
\newcounter{algo}[section]
\newcounter{prog}[section]
\title{Constrained Optimization in the Presence of Noise}
\author{       
        Figen Oztoprak\thanks{Artelys Corporation}
       \and     
               Richard Byrd \thanks{Computer Science Department, University of Colorado, Boulder, USA } 
        \and 
       Jorge Nocedal \thanks{Department of Industrial Engineering and Management Sciences, Northwestern University,  USA.  This author was supported by National Science Foundation grant  DMS-2011494, AFOSR grant FA95502110084, and  ONR grant N00014-21-1-2675.} 
      }
\date{\today}
\begin{document}

\maketitle

\begin{abstract}
The problem of interest is the minimization of a nonlinear function subject to nonlinear equality constraints using a sequential quadratic programming (SQP) method. The minimization must be performed while observing only noisy evaluations of the objective and constraint functions. In order to obtain stability, the classical SQP method is modified by relaxing the standard Armijo line search based on the noise level in the functions, which is assumed to be known.
Convergence theory is presented giving conditions under which the iterates converge to a neighborhood of the solution characterized by the noise level and the problem conditioning. The analysis assumes that the SQP algorithm does not require regularization or trust regions. Numerical experiments indicate that the relaxed line search improves the practical performance of the method on problems involving uniformly distributed noise.  One important application of this work is in the field of derivative-free optimization, when  finite differences are employed to estimate gradients.
\end{abstract}


\bigskip
\section{Introduction}
\label{intro}
\setcounter{equation}{0}

Let us consider the equality constrained nonlinear optimization problem
\begin{equation} \label{problem}
 \min_x f(x) \qquad\mbox{s.t.} \ \ c(x) = 0,
\end{equation}
where $f : \Rn \rightarrow \R$ and $c(x) : \Rn \rightarrow \R^m$ are smooth functions. We assume that the minimization must be performed while observing approximate evaluations $\tilde f(x), \tilde c(x)$ of the functions $f,c$ and their derivatives.

 We consider the application of a sequential quadratic programming (SQP) algorithm that employs an $\ell_1$ merit function to control the stepsize. The goal of the paper is to study the effect of noise on the behavior of the SQP algorithm, particularly the achievable accuracy in the solution, and to highlight the aspects of the algorithm that are most susceptible to errors (or noise)---and redesign them. This work was motivated by applications in which the derivatives of $f$ and $c$ are approximated by finite differences \cite{shi2021numerical}, and thus contain errors, but the algorithm and analysis apply to the more general setting when stochastic or deterministic noise are present in both the function and derivative evaluations.

Let us define 
\begin{equation} \label{grads}
    g(x) = \nabla f(x), \qquad J(x) = \nabla c(x) \in \mathbb{R}^{m \times n}, \quad m<n,
 \end{equation}
  and let $\tilde g(x), \tilde J(x)$  be the corresponding noisy evaluations.  
The iteration of the SQP algorithm is given by
\begin{align} \label{iteration}
& x_{k+1} = x_k +\alpha_k d_k ,
\end{align}
where $d_k$ is the solution of the quadratic subproblem 
\begin{align}   
\min_{d \in \mathbb{R}^n} & \  \tfrac{1}{2} d^T H_k d + \tilde g_k^T d \label{kkt} \\
{\rm s.t.} &  \ \tilde c_k + \tilde J_k d=0 , \label{lincons}
\end{align}
and the steplength $\alpha_k >0$ is chosen so as to ensure sufficient decrease in the merit function 
\begin{equation}  \label{merit}
   \tilde \phi(x) = \tilde f(x) + \pi \| \tilde c(x) \|_1 
\end{equation}
 when the iterates are far away from a solution. Here $\pi >0$ is a penalty parameter that is adjusted during the course of the optimization.
The symmetric matrix $H_k$ is generally chosen as an approximation to the Hessian of the Lagrangian. However, in this paper we  assume that $H_k$ is a  multiple of the identity matrix, 
\begin{equation} \label{hdef}
   H_k= \beta_k I \qquad \beta_k >0,
\end{equation}
because allowing more general choices  introduces more constants in the analysis  without contributing to the main goals of this investigation.

As in the noiseless case, the control of the penalty parameter in \eqref{merit} is of critical importance in the SQP algorithm. $\pi$ should be chosen so that the SQP direction $d_k$ is a descent direction for $ \tilde \phi$ at $x_k$, and it should provide adequate control on the size of $\alpha_k$. The proposed algorithm has the  general form of a classical SQP method \cite{Flet87}, specialized to the case when $H_k$ is a multiple of the identity matrix, and introduces a modification in the line search designed to handle noise. 

We assume throughout that the noise in the function and gradient evaluations is bounded by some constants $\epsilon_f$ and $ \epsilon_c$. This is not always the case in practice (e.g. when noise is Gaussian) but it covers many important practical settings, including computational noise \cite{more2012estimating}. Furthermore, we assume that $\epsilon_f,  \epsilon_c$  are  known, or can be estimated, and that the algorithm has access to them. 

This study was motivated by some practical computations performed by the authors using the {\sc knitro} software package \cite{ByrdNoceWalt06}. They selected a few challenging nonlinear optimization problems involving equality and inequality constraints, injected noise in the objective and constraints, and computed derivatives using noise-aware finite difference formula; see e.g.,  Mor\'e and Wild \cite{more2012estimating} and Berahas et al. \cite{berahas2019derivative}. They observed that, for low levels of noise,  {\sc knitro}  returned acceptable answers, even though one might suspect the default algorithm to be brittle in this setting. As the noise level was increased, the quality of the solution deteriorated markedly, suggesting that classical optimization methods should be redesigned to handle noise. To guide this investigation, it is essential to develop a convergence theory. In this paper, we focus on the case when noise cannot be diminished, and  characterize the accuracy of a noise tolerant optimization algorithm.

 As a first step in this investigation, we find it convenient to consider equality constrained optimization, and study the performance of a sequential quadratic optimization method, which is a simple method in this setting and must yet confront some important challenges raised by the presence of noise.

\subsection{Contributions of this work}
The main contribution of this paper is the development of a convergence theory for a classical sequential quadratic programming (SQP) algorithm for equality constrained optimization in the presence of noise. It is shown that, by introducing a relaxation in the line search procedure while keeping all other components of the SQP method unchanged,  the iterates of the algorithm reach an acceptable neighborhood $C_1$ of the solution defined by a stationarity measure for the problem. Furthermore, once the iterates enter $C_1$ they cannot escape a larger neighborhood $C_2$ and must revisit $C_1$ an infinite number of times. The analysis gives a detailed characterization of these neighborhoods in terms of the noise level and problem characteristics. Numerical experiments show that the relaxed line search is, in fact, beneficial in practice.

Our convergence results assume that errors in function and gradients are bounded, and the analysis is deterministic, yielding somewhat pessimistic bounds. We believe, however, that the results can be useful in the design of robust constrained optimization methods. Specifically, our analysis suggests that only slight modifications are needed so that a classical SQP method is able to handle bounded noise. 

\subsection{Literature Review }

Early work on constrained optimization in the presence of noise is reviewed by Poljak (a.k.a. Polyak) \cite{poljak1978nonlinear}. His study includes penalty, Lagrange, or extended Lagrange functions, and establishes 
 probabilistic convergence theorems provided the steplength is chosen small enough from the start. Hintermueller \cite{hintermuller2002solving} studies a penalty SQP method in which equality constraints are replaced with upper and lower bounding surrogates.
Assuming that the noise level in the function is known, it is shown that in the limit the bounds contain a solution.  Schittkowski \cite{schittkowski2014nlpqlp}  uses a non-monotone line search to handle errors due to approximate function and derivative evaluations. His algorithm was implemented in the NLPQLP software, which is reported to be successful in practice, but no convergence theory were presented. 


The work that is most closely related to this study is \cite{berahas2021stochastic,berahas2021sequential,curtis2021inexact}. In \cite{berahas2021sequential}, an SQP method for equality constrained optimization is presented to handle the case when the objective function is stochastic and the constraints are deterministic. The stepsize is obtained by adaptively estimating Lipschitz constants in place of a line search. Conditions for convergence in expectation are established. \cite{berahas2021stochastic} considers the case when Jacobians can be rank deficient, proposes a step decomposition approach, and presents compelling numerical results. \cite{curtis2021inexact} studies an SQP algorithm with an inexact step computation for the same problem setting. These three papers give careful attention to the behavior of the penalty parameter. For example,  in \cite{berahas2021sequential} the penalty parameter is chosen in a way that provides sufficient descent in the quadratic model of the merit function in the deterministic setting.  In the stochastic setting, they employ the stochastic gradient of the objective in the same formulae for updating the penalty parameter, but they can no longer guarantee that the resulting penalty parameter will be large enough and bounded.  They prove their convergence results assuming that the penalty parameter is well behaved.  Then, they discuss the probability of having small penalty values, and note that the boundedness issue is resolved  by making the same assumption as in this paper, namely that noise is always bounded. 


\medskip \noindent
{\em Notation}. We let $\| \cdot \|$ denote the $\ell_2$ norm, unless otherwise stated. As is the convention, $f_k$ stands for $f(x_k)$ and similarly for other functions. The terms {\em error} and {\em noise} in the functions is used interchangeably. Since we assume absolute bounds on these quantities, the distinction between them is not important in this study.

\section{The Algorithm}
Before presenting the algorithm, we introduce some notation. We model the first-order change in the merit  function $\phi$ at an iterate $x_k$ as
\begin{equation} \label{lineart}
\tilde \ell(x_k;d_k)=\tilde g_k^Td_k +\pi_k\|\tilde c_k+ \tilde J_k d_k\|_1 - \pi_k \|\tilde c_k \|_1 .
\end{equation}
We also define
\begin{equation} \label{linden}
  \hat  \lambda_k =  (\tilde J_k \tilde J_k^T)^{-1} \tilde J_k  \tilde g_k,
\end{equation}
which is the standard least squares multiplier estimate \cite[eqn(18.21)]{mybook}, accounting for noisy function evaluations. We assume that $ \tilde J_k$  is full rank for all $k$, hence $\hat \lambda_k$ is well defined.
 
 The penalty parameter will be updated using the following classical formula \cite[eqn(18.32)]{mybook}. Given a (fixed) parameter $\tau \in (0,1)$, we set at every iteration 
\begin{equation}  \label{updatepi}
    \pi_{k}=   \left\{ 
    \begin{array}{rl}
    \pi_{k-1} \ & \mbox{if} \ \ {\pi_{k-1}} \geq  \frac{1}{1-\tau} \|\hat \lambda_k\|_\infty \\
      \frac{2}{1-\tau} \| \hat \lambda_k\|_\infty \ & \mbox{otherwise.} 
     \end{array} 
     \right.
      \end{equation}
      The factor 2 in the second line of \eqref{updatepi} is introduced so that when $\pi_k$ is increased, it is increased substantially. We will see that this rule ensures that $\pi_k$ is eventually fixed.
(In general, SQP methods do not set  $H_k  = \beta_k I$. In that case, using the least squares multiplier estimate in \eqref{updatepi} will not lead to a convergent method.)

The algorithm for solving problem \eqref{problem}, when only noisy evaluations of the functions $\tilde f, \tilde c, \tilde g, \tilde J$ are available, is  as follows. 
\begin{algorithm}[htp] 
	\caption{Noise Tolerant SQP Algorithm}
	\label{algorithm}
	{\bf Input:} Initial iterate $x_0$,  initial merit parameter $\pi_{-1}>0$,   bounds $\epsilon_f, \epsilon_c$ on the noise \eqref{noise1}, and constants $ \tau, \nu  \in (0,1)$.\\
	Set $k \leftarrow 0$\\
	{\bf Repeat} until a termination test is satisfied:
	\begin{algorithmic}[1]
	        \State Compute $\beta_k >0$ and set $H_k = \beta_k I$ in \eqref{kkt}
		\State Compute $d_k$ by solving \eqref{kkt}-\eqref{lincons}
		\State Compute $\hat \lambda_k$ via \eqref{linden}
		\State Update  penalty parameter $\pi_k$ by \eqref{updatepi}
		\State Compute $\tilde \ell(x_k; d_k)$ as in \eqref{lineart} 
		\State Set $\epsilon_R= 2(\epsilon_f +\pi_k \epsilon_c)$
		\State Choose steplength $\alpha_k > 0$ such that
		\begin{equation} \label{relaxed}
\tilde \phi(x_k+\alpha_k  d_k) \leq \tilde \phi(x_k) +\nu \alpha_k \tilde \ell(x_k; d_k) + \epsilon_R,
\end{equation}
		\State Compute new iterate: $x_{k+1} = x_k + \alpha_k d_k$
		\State Set $k \leftarrow k + 1$
	\end{algorithmic}
\end{algorithm}

\newpage

\medskip\noindent
The steplength $\alpha_k$ is computed in Step 7 using a backtracking line search.
We refer to \eqref{relaxed} as the \emph{relaxed Armijo condition}. The term $\epsilon_R$ introduces a margin that facilitates the convergence analysis in the presence of noise, and as  discussed in Section~\ref{numerical}, is also useful in practice. Note that the line search cannot fail since \eqref{relaxed} is satisfied for sufficiently small $\alpha_k$, by definition of $\epsilon_R$.
In this paper, we assume that the quadratic subproblem \eqref{kkt}-\eqref{lincons} has a unique solution at every iteration---admittedly a strong assumption, but one that helps us focus on the effect of noise without the complicating effects of regularization parameters or trust regions. The study of a practical algorithm that employs those globalization strategies will be the subject of future work.


\section{Global Convergence} \label{global}
In this section we show that the iterates generated by Algorithm~\ref{algorithm} converge to a neighborhood of the solution determined by the noise level and certain characteristics of the problem. We also show that once the iterates reach this neighborhood they  cannot stray away from it (under normal circumstances). We start by stating the assumptions upon which our analysis is built.

\begin{assum} \label{assume1} The function $f$ has a Lipschitz continuous gradient with constant $L_f$. The functions  $\nabla c_i$ are Lipschitz continuous for $i=1,\ldots, m$ with the corresponding constants held in the vector $L_c$. 
 \end{assum}
 
 We also assume that the error (or noise) in the evaluation of the functions is bounded.
 
 \begin{assum} \label{assume2}
 There exist positive constants $\epsilon_f, \epsilon_c, \epsilon_g, \epsilon_J$ such that  for all $x \in \mathbb{R}^n$,
\begin{align}
|\tilde f(x) - f(x)|  \leq \epsilon_f , & \qquad
\|\tilde c(x) - c(x)\|_1  \leq \epsilon_c , \label{noise1} \\
\|\tilde g(x) - g(x)\|  \leq \epsilon_g , & \qquad
\|\tilde J(x) - J(x)\|_{1,2}  \leq \epsilon_J. \label{noise2}
\end{align}
\end{assum}

\noindent\medskip
Here, $\| \cdot \|$ denotes the Euclidean norm and $\|\cdot\|_{1,2}$ denotes the matrix norm induced by the $\ell_1$ norm on $\mathbb{R}^m$ and the Euclidean norm on $\mathbb{R}^n$.

\medskip
As already mentioned, we assume that, for all $k$,  the matrices $\tilde J_k$ have full rank so that the quadratic problem \eqref{kkt}-\eqref{lincons} has a unique solution. To state this precisely, we let $\sigma_{min} (A)$ denote the smallest singular value of a matrix $A$. 

\begin{assum} \label{assume3} For all $k$, the scalar $\beta_k$ in \eqref{hdef} satisfies  
\begin{equation}  \label{betab}
    0<  b_l \leq \beta_k \leq  b_u , 
\end{equation}
for some constants $b_l, b_u$,
and  there is a constant $\gamma >0$ such that
\begin{equation}  \label{singv}
    \sigma_{min} ( J_k) \geq \gamma , \quad \mbox{with   } \ \gamma > \epsilon_J , \ \ \forall k.
\end{equation}
Furthermore, the sequences ${\{f_k\}}, \{\|c_k\|\}$, $\{\|g_k\| \}$, $\{\|J_k \|\}$ generated by the algorithm are bounded.
\end{assum}
\medskip

By the matrix inversion lemma \cite{GoluvanL89} and \eqref{noise2}, if $J_k$ has full rank and $ \gamma > \epsilon_J$, then $\tilde J_k$ is also full rank and 
\begin{equation} \label{mudef}
     \| \tilde J_k^T( \tilde J_k \tilde J_k^T)^{-1} \| \leq \frac{1}{\gamma - \epsilon_J} \equiv \delta , \ \ \forall k.
 \end{equation}

The assumption that the sequences ${\{f_k\}}, \{\|c_k\|\}$, $\{\|g_k\| \}$, $\{\|J_k \|\}$ generated by the algorithm are bounded is fairly standard in the literature and is designed to avoid pathological situations. For example, the merit function $\phi$ may be unbounded below away from the solution if $\pi$  is not large enough. Although there are  strategies to avoid these situations (see e.g. \cite[\S 18.5]{mybook}, we do not include them in our algorithm, for simplicity. 
 \medskip

Given these three sets of assumptions, we are ready to study the convergence properties of Algorithm~\ref{algorithm}.
Let us apply the well known descent lemma (see e.g.\cite{bertsekas2015convex}) to the true (noiseless) merit function
\begin{equation} \label{realphi}
    \phi(x)= f(x) + \pi \| c(x)\|_1.
 \end{equation}
We have that for any $(x,d)$
\begin{align}  \label{plinear}
&\  \phi(x+\alpha d) \leq \phi(x) + \alpha g(x)^Td + \pi\big[\|c(x)+\alpha J(x)d\|_1 - \|c(x)\|_1\big] + \tfrac{1}{2}\big( L_f + \pi \|L_c\|_1 \big)\alpha^2\|d\|^2 .
\end{align}
Thus, we can write
\begin{align}  \label{bb}
 & \ \phi(x+\alpha d) - \phi(x) \leq  \ell(x;\alpha d) + \tfrac{1}{2}\big( L_f + \pi \|L_c\|_1 \big)\alpha^2\|d\|^2 ,
\end{align}
where 
\begin{equation} \label{linearm}
\ell(x;s)=g(x)^Ts +\pi\|c(x)+ J(x)s\|_1 - \pi \|c(x) \|_1.
\end{equation}
When function and derivatives are exact, it is easy to show that for $\pi$ sufficiently large and $\alpha$ sufficiently small we can guarantee a reduction in $\phi$; see \cite{mybook}. We must  establish that this is also the case in the noisy setting---before the iterates approach the region around the solution where noise dominates. We begin by establishing bounds on the step $d_k$.



\subsection{Preliminary results}
 The optimality conditions of the quadratic problem \eqref{kkt}-\eqref{lincons} are given by
\begin{align}   \label{kkt-qp}
\begin{pmatrix}
H_k& \tilde J_k^T\\
\tilde J_k & 0
\end{pmatrix}
\begin{pmatrix}
d_k \\
d_y
\end{pmatrix} = -
\begin{pmatrix}
\tilde g_k +  \tilde J_k^T y \\
\tilde c_k
\end{pmatrix} ,
\end{align}
for some Lagrange multiplier $y \in \mathbb{R}^m$. The step $d_k$ can be written as the sum of two orthogonal components,  
\begin{equation} \label{ddecomp}
   d_k = v_k + u_k ,
\end{equation}
where $v_k$ is in the range space of $\tilde J_k^T$ and $u_k$ is in the null space of $J_k$.
A simple computation from \eqref{kkt-qp} shows that
\begin{equation}  \label{uv}
v_k = -\tilde J_k^T(\tilde J_k \tilde J_k^T)^{-1}\tilde c_k, \quad \mbox{}\quad u_k=
 -\frac{1}{\beta_k}\tilde P_k \tilde g_k ,
\end{equation}
where 
\begin{equation} \label{projection}
\tilde P_k = I - \tilde J_k^T \left(\tilde J_k \tilde J_k^T\right)^{-1}\tilde J_k
\end{equation}
 is an orthogonal projection matrix onto the tangent space of the constraints. 
We now establish  bounds on $u_k, v_k$. In what follows, we let  $J^\dagger$ denote the Moore-Penrose generalized inverse of a matrix $J$, and define
$ P_k = I -  J_k^T \left( J_k  J_k^T\right)^{-1} J_k$. Since $\tilde P_k$ and $P_k$ are orthogonal projections, we have that $\| \tilde P_k \| = \| P_k\| =1$.

\begin{lem} \label{iris} 
Under Assumptions~3.1 and 3.2  we have both 
\begin{align}  
\|v_k\|_1  & \leq \delta \|\tilde c_k\|_1 \leq  {\delta} (\| c_k\|_1 + \epsilon_c) \label{vbound} \\
\|u_k\| &  \leq \frac{1}{\beta_k}\big(\|P_k g_k\| + \|g_k\|\eta\epsilon_J + \epsilon_g\big),             \label{tanupperbound}
\end{align}
where $\delta$ is defined in \eqref{mudef} and
\begin{equation}  \label{eta}
\eta=1/\gamma .
\end{equation}
Therefore, 
\begin{align}  
\|d_k\| \leq  \delta (\| c_k\|_1 + \epsilon_c)+ \frac{1}{\beta_k} \big(\|P_k g_k\| + \|g_k\|\eta\epsilon_J + \epsilon_g \big).   \label{dkbound}
\end{align}
\end{lem}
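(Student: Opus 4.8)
The plan is to bound the two orthogonal components of the step separately, using their closed forms \eqref{uv}, and then to recombine them through \eqref{ddecomp}. As a preliminary observation I would record that, since $\|\tilde J_k-J_k\|_{1,2}\le\epsilon_J$ dominates the Euclidean operator norm (because $\|\cdot\|_2\le\|\cdot\|_1$ gives $\|A\|_2\le\|A\|_{1,2}$), we have $\|\tilde J_k-J_k\|_2\le\epsilon_J$, so that the bound \eqref{mudef} is available with $\delta=1/(\gamma-\epsilon_J)$, and likewise $\|J_k^\dagger\|\le 1/\gamma=\eta$ from \eqref{singv} and \eqref{eta}.

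For $v_k$, the estimate \eqref{vbound} follows directly from the closed form $v_k=-\tilde J_k^T(\tilde J_k\tilde J_k^T)^{-1}\tilde c_k$: apply submultiplicativity together with \eqref{mudef} to obtain $\|v_k\|\le\delta\|\tilde c_k\|_1$, and then the triangle inequality with the noise bound \eqref{noise1}, namely $\|\tilde c_k\|_1\le\|c_k\|_1+\epsilon_c$, to reach $\delta(\|c_k\|_1+\epsilon_c)$. The only care needed is to carry the $\ell_1$ norm consistently through $\tilde c_k$; the later Euclidean bound on $\|d_k\|$ will use only $\|v_k\|_2\le\|v_k\|_1$.

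The substance of the lemma is the tangential bound \eqref{tanupperbound}. Starting from $u_k=-\beta_k^{-1}\tilde P_k\tilde g_k$, I would first peel off the gradient noise by writing $\tilde P_k\tilde g_k=\tilde P_k g_k+\tilde P_k(\tilde g_k-g_k)$ and bounding the second term by $\|\tilde P_k\|\,\epsilon_g=\epsilon_g$ via \eqref{noise2} and $\|\tilde P_k\|=1$. The remaining term $\tilde P_k g_k$ must be compared with the \emph{noiseless} projection $P_k g_k$. The idea is to split $g_k=P_k g_k+(I-P_k)g_k$ and treat the two pieces differently: the first gives $\|\tilde P_k P_k g_k\|\le\|P_k g_k\|$ again because $\|\tilde P_k\|=1$, while for the second, since $(I-P_k)g_k$ lies in the range of $J_k^T$, I would exploit $\tilde P_k\tilde J_k^T=0$ to write
\begin{equation*}
\tilde P_k(I-P_k)g_k=\tilde P_k J_k^T(J_kJ_k^T)^{-1}J_k g_k=-\tilde P_k(\tilde J_k-J_k)^T(J_kJ_k^T)^{-1}J_k g_k .
\end{equation*}
Bounding this by $\|\tilde P_k\|\,\|\tilde J_k-J_k\|_2\,\|J_k^\dagger\|\,\|g_k\|\le\epsilon_J\,\eta\,\|g_k\|$ then yields \eqref{tanupperbound}.

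Finally, \eqref{dkbound} is immediate: since $v_k$ and $u_k$ are orthogonal, $\|d_k\|\le\|v_k\|_2+\|u_k\|\le\|v_k\|_1+\|u_k\|$, and substituting \eqref{vbound} and \eqref{tanupperbound} gives the stated bound. I expect the one genuinely delicate step to be the replacement of $\tilde J_k^T$ by $(\tilde J_k-J_k)^T$ through $\tilde P_k\tilde J_k^T=0$: this is what converts the projection perturbation into a clean $\eta\epsilon_J$ factor and, crucially, avoids introducing the larger constant $\delta$ on the tangential term. The accompanying mixed-norm bookkeeping, relating $\|\cdot\|_{1,2}$ to the Euclidean operator norm, is routine by comparison.
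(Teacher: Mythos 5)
Your proof is correct, and for the substantive bound \eqref{tanupperbound} it takes a genuinely different route from the paper. The paper performs the same first-level split (peeling off $\|\tilde P_k\|\,\|g_k-\tilde g_k\|\leq\epsilon_g$), but then bounds $\|(\tilde P_k-P_k)g_k\|\leq\|\tilde P_k-P_k\|\,\|g_k\|$ and invokes Stewart's perturbation theorems for orthogonal projections as a black box to obtain $\|\tilde P_k-P_k\|\leq\epsilon_J/\gamma=\eta\epsilon_J$, i.e.\ \eqref{marina}. You avoid the citation entirely: the splitting $g_k=P_kg_k+(I-P_k)g_k$ combined with the identity $\tilde P_k\tilde J_k^T=0$ turns $\tilde P_k(I-P_k)g_k$ into $-\tilde P_k(\tilde J_k-J_k)^T(J_kJ_k^T)^{-1}J_kg_k$, which amounts to a self-contained proof of the one-sided perturbation bound $\|\tilde P_k(I-P_k)\|\leq\eta\epsilon_J$ --- the only half of Stewart's result the lemma actually uses, since it is applied to $g_k$ after the split. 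What each approach buys: the paper's is shorter on the page and leans on a reusable general-purpose result; yours is elementary, and makes transparent \emph{why} the tangential term carries the noiseless constant $\eta=1/\gamma$ rather than the noisy $\delta=1/(\gamma-\epsilon_J)$, precisely because $\tilde J_k^T$ is annihilated by $\tilde P_k$ before any inverse of a perturbed matrix is needed. Your preliminary observation that $\|A\|_2\leq\|A\|_{1,2}$, so that $\|\tilde J_k-J_k\|_2\leq\epsilon_J$, is a norm-compatibility step the paper leaves implicit but that both arguments require. The treatments of $v_k$ and of \eqref{dkbound} coincide with the paper's (which simply declares them direct consequences of \eqref{uv}, \eqref{mudef}, \eqref{noise1}); your remark that only $\|v_k\|_2\leq\|v_k\|_1$ is needed downstream, and that the triangle inequality suffices without invoking the orthogonality of $u_k$ and $v_k$, is accurate bookkeeping.
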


\begin{proof}
The bounds \eqref{vbound} follow directly from   \eqref{uv}, \eqref{mudef}, and \eqref{noise1}.
 By \eqref{noise2}, we can bound the norm of the tangential component as follows
\begin{align}
\|u_k\| & =\frac{1}{\beta_k}\|\tilde P_k\tilde g_k\|      \nonumber             \\
& \leq \frac{1}{\beta_k}\left(\|P_k g_k\| + \|(\tilde P_k - P_k)g_k\| + \|\tilde P_k\|\|g_k -\tilde g_k\| \right)  \nonumber\\ & \leq \frac{1}{\beta_k}\left(\|P_k g_k\| + \|(\tilde P_k - P_k)\| \| g_k\| + \epsilon_g \right) .  \label{simb}        
\end{align}
 Moreover, by the bounds on perturbed projection matrices 
 \cite[Theorems 2.3 and 2.4]{stewart1977perturbation}
we have that 
\begin{equation}  \label{marina}
 \| \tilde P_k - P_k\| \leq  \frac{\epsilon_J}{\gamma} \equiv \eta \epsilon_J .
\end{equation} 
This yields \eqref{tanupperbound}.
\end{proof}


\subsection{Penalty Parameter and Model Decrease}
We note from \eqref{bb} that in order to obtain a decrease in the true merit function $ \phi$, we must ensure that $\ell(x_k; \alpha_k d_k)$ is negative. We will see that this can be achieved for $\alpha_k=1$ by choosing a sufficiently large penalty parameter $\pi$, and provided noise does not dominate. 
\begin{lem}  \label{fullprof}
If at every iteration $k$ the penalty parameter satisfies 
\begin{equation} \label{pichoice}
   \pi_k \geq \frac{1}{1-\tau} \|  (\tilde J_k \tilde J_k^T)^{-1} \tilde J_k \tilde g_k\|_\infty, \quad \tau \in (0,1),
 \end{equation}
  then
\begin{align} \label{lchange}
\ell(x_k;d_k) \leq & -\frac{1}{\beta_k}g_k^T P_k g_k + \frac{1}{\beta_k}( \|g_k\|^2\eta\epsilon_J + \epsilon_g\|g_k\|) - \tau\pi_k \| c_k \|_1 +{\epsilon_g}{\delta}(\| c_k\|_1 +\epsilon_c)  \\
  &+\pi_k \left((2-\tau)\epsilon_c + \epsilon_J \left(\delta (\| c_k\|_1 + \epsilon_c)+ \frac{1}{\beta_k} (\|P_k g_k\| + \|g_k\|\eta\epsilon_J + \epsilon_g) \right) \right) . \nonumber
\end{align}
\end{lem}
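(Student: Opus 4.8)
The plan is to expand the true model $\ell(x_k;d_k)$ directly from its definition \eqref{linearm}, using the orthogonal decomposition $d_k = v_k + u_k$ of \eqref{ddecomp} together with the fact that $d_k$ exactly satisfies the \emph{noisy} linearized constraint, $\tilde c_k + \tilde J_k d_k = 0$. The derivation follows the classical noiseless pattern, but each time a true quantity ($g_k$, $c_k$, $J_k$, $P_k$) is swapped for its noisy counterpart I would pay a controlled penalty using Assumption~\ref{assume2} and the bounds already established in Lemma~\ref{iris}.

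First I would control the constraint term $\|c_k + J_k d_k\|_1$. Writing $c_k + J_k d_k = (c_k - \tilde c_k) + (\tilde c_k + \tilde J_k d_k) + (J_k - \tilde J_k)d_k$ and noting that the middle group vanishes, the noise bounds \eqref{noise1}--\eqref{noise2} give $\|c_k + J_k d_k\|_1 \le \epsilon_c + \epsilon_J\|d_k\|$. Next I would split the gradient term as $g_k^T d_k = g_k^T u_k + g_k^T v_k$. For the tangential part, substituting $u_k = -\tfrac{1}{\beta_k}\tilde P_k\tilde g_k$ and expanding $\tilde P_k\tilde g_k = P_k g_k + (\tilde P_k - P_k)g_k + \tilde P_k(\tilde g_k - g_k)$, then invoking the projection perturbation bound \eqref{marina} and $\|\tilde P_k\| = 1$, yields $g_k^T u_k \le -\tfrac{1}{\beta_k}g_k^T P_k g_k + \tfrac{1}{\beta_k}(\|g_k\|^2\eta\epsilon_J + \epsilon_g\|g_k\|)$, which accounts for the first two terms of \eqref{lchange}.

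The crux is the normal part $g_k^T v_k$, since this is where the penalty parameter must enter. Writing $g_k^T v_k = \tilde g_k^T v_k + (g_k - \tilde g_k)^T v_k$ and using $v_k = -\tilde J_k^T(\tilde J_k\tilde J_k^T)^{-1}\tilde c_k$, a direct computation gives $\tilde g_k^T v_k = -\hat\lambda_k^T\tilde c_k$ with $\hat\lambda_k$ the noisy least-squares multiplier of \eqref{linden}. The $\ell_\infty$--$\ell_1$ Hölder inequality then bounds this by $\|\hat\lambda_k\|_\infty\|\tilde c_k\|_1$, while the residual obeys $(g_k - \tilde g_k)^T v_k \le \epsilon_g\|v_k\|_1 \le \epsilon_g\delta(\|c_k\|_1 + \epsilon_c)$ by \eqref{vbound}. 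The hypothesis \eqref{pichoice} is precisely $\|\hat\lambda_k\|_\infty \le (1-\tau)\pi_k$, so after inserting $\|\tilde c_k\|_1 \le \|c_k\|_1 + \epsilon_c$ the coefficient of $\|c_k\|_1$ arising from $g_k^T v_k$ is $(1-\tau)\pi_k$; this combines with the $-\pi_k\|c_k\|_1$ from the merit term in \eqref{linearm} to leave exactly $-\tau\pi_k\|c_k\|_1$.

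Finally I would assemble the pieces: add the three bounds, collect the constant penalty contributions into $(2-\tau)\pi_k\epsilon_c$, and replace the leftover $\pi_k\epsilon_J\|d_k\|$ with the step bound \eqref{dkbound} to produce the second line of \eqref{lchange}. The main obstacle is organizational rather than technical: one must keep the noisy and true quantities carefully aligned so that the penalty condition, which is stated through the \emph{noisy} multiplier $\hat\lambda_k$ and the \emph{noisy} constraint $\tilde c_k$, cleanly cancels the true $\|c_k\|_1$ term, while every mismatch is absorbed into an $\epsilon$-proportional remainder. No individual estimate is deep, but the bookkeeping must be exact for the coefficients $-\tau\pi_k$ and $(2-\tau)\pi_k\epsilon_c$ to emerge as stated.
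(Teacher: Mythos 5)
Your proposal is correct and follows essentially the same route as the paper's proof: the same use of $\tilde c_k + \tilde J_k d_k = 0$ to bound $\|c_k + J_k d_k\|_1 \le \epsilon_c + \epsilon_J\|d_k\|$, the same projection-perturbation estimate for the tangential term, the same swap of $g_k$ for $\tilde g_k$ in the normal term at cost $\epsilon_g\delta\|\tilde c_k\|_1$ followed by the H\"older bound $\|\hat\lambda_k\|_\infty\|\tilde c_k\|_1 \le (1-\tau)\pi_k(\|c_k\|_1+\epsilon_c)$, and the same final substitution of \eqref{dkbound} for $\|d_k\|$. Your bookkeeping yielding the coefficients $-\tau\pi_k\|c_k\|_1$ and $(2-\tau)\pi_k\epsilon_c$ matches the paper exactly.
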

\begin{proof}
Since 
$
 d_k = -\frac{1}{\beta_k}\tilde P_k\tilde g_k   -\tilde J_k^T(\tilde J_k \tilde J_k^T)^{-1}\tilde c_k ,
$
we have from \eqref{linearm}, \eqref{lincons}, \eqref{mudef}, \eqref{noise1}, \eqref{noise2},  and the definition of the $\| \cdot \|_{1,2}$ norm in \eqref{noise2}, that
\begin{align}
\ell(x_k;d_k) = & \ g_k^T d_k + \pi_k \| c_k + J_k d_k \|_1 - \pi_k \| c_k\|_1 \\
 \leq &-\frac{1}{\beta_k}g_k^T\tilde P_k \tilde g_k   -g_k^T\tilde J_k^T(\tilde J_k \tilde J_k^T)^{-1}\tilde c_k  +\pi_k\|c_k+ J_kd_k\|_1 - \pi_k \| c_k \|_1   \nonumber  \\
\leq& -\frac{1}{\beta_k}g_k^T\tilde P_k \tilde g_k   -g_k^T\tilde J_k^T(\tilde J_k \tilde J_k^T)^{-1}\tilde c_k  +\pi_k\|( c_k -\tilde c_k) +( J_k - \tilde J_k) d_k)\|_1 - \pi_k \| c_k \|_1      \nonumber \\
\leq & -\frac{1}{\beta_k}g_k^T\tilde P_k \tilde g_k  -\tilde g_k^T\tilde J_k^T(\tilde J_k \tilde J_k^T)^{-1}\tilde c_k  + {\epsilon_g}{\delta}\|\tilde c_k\|_1   
             +\pi_k (\epsilon_c + \epsilon_J\|d_k\|) - \pi_k \| c_k \|_1      \nonumber        \\
\leq & -\frac{1}{\beta_k}g_k^T\tilde P_k \tilde g_k  -\tilde g_k^T\tilde J_k^T(\tilde J_k \tilde J_k^T)^{-1}\tilde c_k  + {\epsilon_g}{\delta}(\| c_k\|_1 +\epsilon_c)  \nonumber   \\
&             +\pi_k \left[\epsilon_c + \epsilon_J \left(\delta (\| c_k\|_1 + \epsilon_c)+ \frac{1}{\beta_k} (\|P_k g_k\| + \|g_k\|\eta\epsilon_J + \epsilon_g) \right) \right] - \pi_k \| c_k \|_1,  \nonumber
\end{align}
the last line following by (\ref{dkbound}). 
Next,  since $\| \tilde P_k\|=1$ and recalling \eqref{marina}, we obtain
\begin{align*} 
 - g_k^T \tilde P_k \tilde g_k & \leq - g_k^TP_k g_k + \| g_k \| \|P_k g_k - \tilde P_k \tilde g_k \| \\
      & \leq - g_k^T  P_k  g_k + \| g_k \|  \| P_k g_k - \tilde P_k g_k + \tilde P_k g_k - \tilde P_k \tilde g_k \| \\
      & \leq - g_k^T  P_k g_k + \| g_k \|^2 \|P_k - \tilde P_k\| + \| g_k \| \|g_k - \tilde g_k \| \\
      & \leq - g_k^T P_k g_k + \|g_k \|^2 \eta \epsilon_J + \| g_k\| \epsilon_g .
 \end{align*}
 Therefore,
 \begin{align*}
 \ell(x_k; d_k) & \leq-\frac{1}{\beta_k}g_k^T P_k g_k + \frac{1}{\beta_k}( \|g_k\|^2\eta\epsilon_J + \epsilon_g\|g_k\|)  -\tilde g_k^T\tilde J_k^T(\tilde J_k \tilde J_k^T)^{-1}\tilde c_k  + {\epsilon_g}{\delta}(\| c_k\|_1 +\epsilon_c)  \nonumber   \\
&             +\pi_k \left[\epsilon_c + \epsilon_J \left(\delta (\| c_k\|_1 + \epsilon_c)+ \frac{1}{\beta_k} (\|P_k g_k\| + \|g_k\|\eta\epsilon_J + \epsilon_g) \right) \right] - \pi_k \| c_k \|_1.  
\end{align*}
Now suppose that we choose the parameter $\pi_k$ so that \eqref{pichoice} holds.  
Then 
\[
-\tilde g_k^T\tilde J_k^T(\tilde J_k \tilde J_k^T)^{-1}\tilde c_k \leq \|\tilde g_k^T\tilde J_k^T(\tilde J_k \tilde J_k^T)^{-1}\|_\infty\|\tilde c_k\|_1 \leq (1-\tau)\pi_k (  \|c_k\|_1+ \epsilon_c) ,
\]
and it follows that
\begin{align*} 
\ell(x_k;d_k) \leq & -\frac{1}{\beta_k}g_k^T P_k g_k + \frac{1}{\beta_k}( \|g_k\|^2\eta\epsilon_J + \epsilon_g\|g_k\|) - \tau\pi_k \| c_k \|_1 +{\epsilon_g}{\delta}(\| c_k\|_1 +\epsilon_c)  \\
  &+\pi_k \left[(2-\tau) \epsilon_c + \epsilon_J \left(\delta (\| c_k\|_1 + \epsilon_c)+ \frac{1}{\beta_k} (\|P_k g_k\| + \|g_k\|\eta\epsilon_J + \epsilon_g) \right) \right]  .
\end{align*}
\end{proof}

Lemma~\ref{fullprof} implies that for any $x_k $ such that the right hand side of \eqref{lchange} is negative, we have
$
\ell(x_k;d_k)<0.
$
We  now provide conditions under which the decrease in $\ell$ is proportional to the optimality conditions of the nonlinear problem \eqref{problem}. Specifically, since $g_k^TP_k g_k = \| P_k g_k\|^2$ is the norm squared of the projected gradient, a combination of $g_k^TP_k g_k$ and $\| c_k\|_1$ can be regarded as a measure of  stationarity of the constrained optimization problem. The following result assumes that the optimality measure is not small compared to the errors (or noise).

\begin{cor} \label{suff}
Choose  any $\theta_1 \in [0,1)$.
For any $x_k $ sufficiently far from the solution such that 
\begin{align}
(1-\theta_1)\Big(\frac{1}{\beta_k}g_k^T P_k g_k + &\tau\pi_k \| c_k \|_1\Big)  \geq  
{E}(x_k,\beta_k, \pi_k)  ,   \label{critical1} 
\end{align} 
where  
\begin{align}
{E}(x,\beta,\pi) =& 
\frac{1}{\beta}( \|g(x)\|^2\eta\epsilon_J + \epsilon_g\|g(x)\|)  +{\epsilon_g}{\delta}{(\| c(x)\|_1 + \epsilon_c)}   \nonumber  \\
  +& \pi \left[{(2-\tau)\epsilon_c}  + \epsilon_J \left(\delta (\| c(x)\|_1 + \epsilon_c)+ \frac{1}{\beta} (\|P(x) g(x)\| + \|g(x)\|\eta\epsilon_J + \epsilon_g) \right)   \right] , \label{epsilonbar} 
\end{align} 
we have 
\begin{equation}  \label{ldecrease}
\ell(x_k;d_k) \leq  -\theta_1\left( \frac{1}{\beta_k}g_k^T P_k g_k +\tau\pi_k \| c_k \|_1 \right).
\end{equation}
\end{cor}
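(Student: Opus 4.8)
The plan is to recognize that Corollary~\ref{suff} is essentially a rearrangement of the bound \eqref{lchange} established in Lemma~\ref{fullprof}, combined with the hypothesis \eqref{critical1}. The first step is pure bookkeeping: I would observe that the right-hand side of \eqref{lchange} splits into a pair of genuinely negative terms, namely $-\tfrac{1}{\beta_k}g_k^T P_k g_k$ and $-\tau\pi_k\|c_k\|_1$, together with a collection of nonnegative error terms. Comparing the remaining terms of \eqref{lchange} against the definition \eqref{epsilonbar} of $E(x,\beta,\pi)$, I would verify that $E(x_k,\beta_k,\pi_k)$ is exactly the sum of those error terms: the $\tfrac{1}{\beta_k}(\|g_k\|^2\eta\epsilon_J+\epsilon_g\|g_k\|)$ contribution, the $\epsilon_g\delta(\|c_k\|_1+\epsilon_c)$ contribution, and the bracketed $\pi_k[\cdots]$ contribution. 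Hence Lemma~\ref{fullprof} can be rewritten compactly as
\[
\ell(x_k;d_k) \leq -\Big(\tfrac{1}{\beta_k}g_k^T P_k g_k + \tau\pi_k\|c_k\|_1\Big) + E(x_k,\beta_k,\pi_k).
\]

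Writing $S_k = \tfrac{1}{\beta_k}g_k^T P_k g_k + \tau\pi_k\|c_k\|_1$ for the stationarity combination, the second step is to invoke the standing hypothesis \eqref{critical1}, which reads $(1-\theta_1)S_k \geq E(x_k,\beta_k,\pi_k)$. Substituting this upper bound on $E$ into the rewritten inequality yields
\[
\ell(x_k;d_k) \leq -S_k + (1-\theta_1)S_k = -\theta_1 S_k,
\]
which is precisely the claimed bound \eqref{ldecrease}.

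The argument is entirely algebraic and requires no new estimates beyond Lemma~\ref{fullprof}; the only point demanding care is the term-matching in the first step, ensuring that every error term of \eqref{lchange} is accounted for in \eqref{epsilonbar} with the correct sign and coefficient---in particular the factor $(2-\tau)$ multiplying $\epsilon_c$ inside the $\pi_k$ bracket. I do not anticipate any genuine obstacle here: the role of the corollary is simply to repackage the bound so that the left-over quantity $-\theta_1 S_k$, a fixed fraction of the stationarity measure, can drive the subsequent global-convergence analysis. It is worth noting that the hypothesis \eqref{critical1} is only meaningful when $S_k>0$, i.e.\ when $x_k$ is ``sufficiently far from the solution'' in the sense that the projected-gradient and constraint-violation measure dominates the noise level $E$; this is exactly the regime in which a guaranteed decrease in the true merit function $\phi$ via \eqref{bb} should be expected.
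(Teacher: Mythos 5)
Your proposal is correct and is essentially the same as the paper's proof: both reduce the corollary to the compact form $\ell(x_k;d_k) \leq -\bigl(\tfrac{1}{\beta_k}g_k^T P_k g_k + \tau\pi_k\|c_k\|_1\bigr) + E(x_k,\beta_k,\pi_k)$ from Lemma~\ref{fullprof} and then apply \eqref{critical1}, the only cosmetic difference being that the paper splits the negative term as $-\theta_1 S_k - (1-\theta_1)S_k$ and absorbs $(1-\theta_1)S_k$ against $E$, whereas you substitute $E \leq (1-\theta_1)S_k$ directly---the identical algebra.
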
 
\begin{proof}
For any $\theta_1 \in [0,1)$, we can rewrite (\ref{lchange}) as
\begin{align} 
\ell(x_k;d_k) \leq & -\theta_1( \frac{1}{\beta_k}g_k^T P_k g_k +\tau\pi_k \| c_k \|_1)- (1-\theta_1)(\frac{1}{\beta_k}g_k^T P_k g_k +\tau\pi_k \| c_k \|_1) \nonumber \\
 &+\frac{1}{\beta_k}( \|g_k\|^2\eta\epsilon_J + \epsilon_g\|g_k\|)  
  +{\epsilon_g}{\delta}\|{(\| c_k\|_1 + \epsilon_c)}  \nonumber \\
  & +\pi_k \left({(2-\tau)\epsilon_c}  + \epsilon_J \left(\delta (\| c_k\|_1 + \epsilon_c)+ \frac{1}{\beta_k} (\|P_kg_k\| + \|g_k\|\eta\epsilon_J + \epsilon_g) \right)  \right) ,\nonumber
\end{align}
from which \eqref{ldecrease} follows by condition \eqref{critical1}.
\end{proof}

{In order to make this result, and similar results to be proved later, more understandable and more convenient to use, we recall that $g(x)^TP(x)g(x) = \|P(x) g(x)\|^2$, and define the function
\begin{equation}   \label{psi}
   \psi_\pi(x) = \frac{1}{b_u}\|P(x) g(x)\|^2  +\pi \tau \| c(x) \|_1 ,
\end{equation}
where $b_u$ is given in \eqref{betab}.
Clearly, $\psi_\pi$ may be viewed as a measure of non-stationarity since $\psi_\pi(x^\ast)=0$ when $x^\ast$ is a stationary point of the problem (\ref{problem}).
Given this notation we can restate a slightly weaker version of Corollary~\ref{suff}.

\begin{cor} \label{suff2}
Choose  any $\theta_1 \in [0,1)$.
For any $x_k $ sufficiently far from the solution such that 
\begin{align}
\psi_{\pi_k}(x_k)  \geq  E(x_k,\beta_k, \pi_k)/(1-\theta_1),   \label{critical11} 
\end{align} 
we have 
\begin{equation}  \label{lde2}
\ell(x_k;d_k) \leq  -\theta_1\left( \frac{1}{\beta_k}g_k^T P_k g_k +\tau\pi_k \| c_k \|_1 \right) 
\leq -\theta_1  \psi_{\pi_k}(x_k).
\end{equation}
\end{cor}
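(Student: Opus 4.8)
The plan is to deduce Corollary~\ref{suff2} directly from the already-established Corollary~\ref{suff} by inserting a single inequality that compares the ``true'' stationarity combination appearing in \eqref{critical1} with the quantity $\psi_{\pi_k}$ defined in \eqref{psi}. The whole argument is bookkeeping: the only real content is that $\psi_{\pi_k}$ uses the uniform bound $1/b_u$ in place of the iteration-dependent $1/\beta_k$, and since $\beta_k \leq b_u$ by \eqref{betab} we have $1/\beta_k \geq 1/b_u$. This makes $\psi_{\pi_k}(x_k)$ a \emph{lower} bound on the combination used in Corollary~\ref{suff}, which is exactly why the present statement is a ``slightly weaker'' reformulation.

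First I would record the key inequality. Using $g_k^T P_k g_k = \|P_k g_k\|^2$ together with $\beta_k \leq b_u$,
\begin{equation*}
\frac{1}{\beta_k} g_k^T P_k g_k + \tau \pi_k \|c_k\|_1 \;\geq\; \frac{1}{b_u}\|P_k g_k\|^2 + \tau \pi_k \|c_k\|_1 \;=\; \psi_{\pi_k}(x_k).
\end{equation*}
This single relation does all the work and will be invoked twice.

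Next I would verify that hypothesis \eqref{critical11} implies hypothesis \eqref{critical1}. Rewriting \eqref{critical11} as $(1-\theta_1)\psi_{\pi_k}(x_k) \geq E(x_k,\beta_k,\pi_k)$ and chaining it with the key inequality above gives
\begin{equation*}
(1-\theta_1)\Big(\tfrac{1}{\beta_k} g_k^T P_k g_k + \tau \pi_k \|c_k\|_1\Big) \;\geq\; (1-\theta_1)\,\psi_{\pi_k}(x_k) \;\geq\; E(x_k,\beta_k,\pi_k),
\end{equation*}
which is precisely \eqref{critical1}. Hence Corollary~\ref{suff} applies and yields the first inequality of \eqref{lde2}, namely $\ell(x_k;d_k) \leq -\theta_1\big(\tfrac{1}{\beta_k} g_k^T P_k g_k + \tau \pi_k \|c_k\|_1\big)$. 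Finally, applying the key inequality once more (now negated) converts this into the second inequality, $-\theta_1\big(\tfrac{1}{\beta_k} g_k^T P_k g_k + \tau \pi_k \|c_k\|_1\big) \leq -\theta_1 \psi_{\pi_k}(x_k)$, completing the chain.

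There is essentially no obstacle here; the proof is a two-line consequence of Corollary~\ref{suff}. The only point requiring a moment's care is the direction of the substitution $1/\beta_k \geq 1/b_u$: it must be checked that strengthening the hypothesis (by using the smaller $\psi_{\pi_k}$) and weakening the conclusion (by bounding the decrease by the smaller $\psi_{\pi_k}$) are consistent, which they are because $\psi_{\pi_k}(x_k)$ sits on the ``small'' side of both the required lower bound and the guaranteed decrease.
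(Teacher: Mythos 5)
Your proof is correct and takes essentially the same route as the paper: the paper's one-line proof rests on exactly your key inequality $\psi_{\pi_k}(x_k) \leq \frac{1}{\beta_k} g_k^T P_k g_k + \tau \pi_k \| c_k \|_1$ (a consequence of $\beta_k \leq b_u$ and $g_k^T P_k g_k = \|P_k g_k\|^2$), used once to show that \eqref{critical11} implies hypothesis \eqref{critical1} of Corollary~\ref{suff} and once to pass from its conclusion to $-\theta_1 \psi_{\pi_k}(x_k)$. You have simply spelled out the bookkeeping that the paper leaves implicit.
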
 
\begin{proof} The result follows from the fact that
\[
  \psi_{\pi_k}(x_k) \leq \Big(\frac{1}{\beta_k}g_k^T P_k g_k + \tau\pi_k \| c_k \|_1\Big).
  \]
  \end{proof}

\subsection{Line search}

Since $\pi_k$ is defined by \eqref{updatepi} and \eqref{linden}, and by Assumptions~\ref{assume3}, we have that $\{ \|\hat \lambda_k\| \}  $ is bounded.
Moreover, since $\{ \pi_k \}$ is monotone and since $\pi_k - \pi_{k-1}$ is either zero or greater than $\pi_{k-1}$, there exists values $k_0$ and $\bar \pi$ such that:
\begin{equation}  \label{k0}
   \pi_k= \bar \pi, \quad \forall k \geq k_0,
\end{equation}
and (\ref{pichoice}) is satisfied. The rest of the analysis assumes that the penalty parameter has attained that fixed value $\bar \pi$. Thus, for the rest of the section
\begin{equation} \label{fixphi}
    \tilde \phi(x_k) \equiv \tilde f(x_k) + \bar \pi \|\tilde c(x_k)\|_1, \qquad
     \phi(x_k) \equiv  f(x_k) + \bar \pi \| c(x_k)\|_1, \ \forall k \geq k_0.
  \end{equation}
  
Algorithm~\ref{algorithm} sets $x_{k+1}=x_k+\alpha_k  d_k$, where $\alpha_k$ is chosen by repeated halving until the  {\em relaxed  Armijo condition} is satisfied:
\[
\tilde \phi(x_k+\alpha_k  d_k) \leq \tilde \phi(x_k) +{\nu} \alpha_k \tilde \ell(x_k; d_k) + \epsilon_R, 
\]
for some constants
$
{\nu} \in (0,1)$ and $ \epsilon_R \,{\geq} \, 2( \epsilon_f + \bar \pi \epsilon_c ),
$
where $\tilde \ell(x_k;d_k)$ is defined in \eqref{lineart}.
%
In other words, we require that the decrease in the noisy merit function be a fraction of the decrease of the noisy first-order model $\tilde \ell$, plus a relaxation term. 

To ensure that the line search yields significant progress toward a solution, we need to show that $\alpha_k$ is bounded away from zero and that $\tilde \ell(x_k;d_k)$ is sufficiently negative. To do so, we recall that we
    have established in \eqref{lde2}  that the noiseless first-order model $\ell(x_k;d)_k$ is sufficiently negative when condition \eqref{critical11} is satisfied. To relate $ \ell(x_k;d_k)$ to $\tilde \ell(x_k;d_k)$, we recall \eqref{linearm} and \eqref{lineart}, and measure the difference between these two quantities.
By (\ref{dkbound}) 
\begin{align}
|\tilde \ell(x_k;d_k) - \ell(x_k;d_k)| \leq & \ \epsilon_g \|d_k\| +2 \bar \pi \epsilon_c+ \bar \pi \epsilon_J \|d_k\|  \nonumber \\
\leq & \ ( \epsilon_g +\bar \pi \epsilon_J) \left( \delta (\| c_k\|_1 + \epsilon_c)+ \frac{1}{\beta_k} (\|P_k g_k\| + \|g_k\|\eta\epsilon_J + \epsilon_g) \right) + 2 \bar \pi \epsilon_c 
 \label{bwon0} \\
 \leq & \ {( \epsilon_g +\bar \pi \epsilon_J) \left( \delta (C_c + \epsilon_c)+ \frac{1}{b_l} (C_g + C_g \eta\epsilon_J + \epsilon_g) \right) + 2 \bar \pi \epsilon_c }
 \nonumber \\
\equiv & \    {\epsilon_\ell  ,} \label{bwon}
\end{align}
where  $C_g, C_c$ are constants such that 
\begin{equation} \label{Cs}
     \|g(x_k) \| \leq C_g, \qquad \|c(x_k)\|_1 \leq C_c \quad \forall k> k_0.
\end{equation}

We know that these constants exist because of Assumption~\ref{assume3}. 
We now describe conditions under which one can characterize the size of the steplength $\alpha_k$.
Let
\begin{equation}   \label{ltotal}
L= L_f + \bar \pi \|L_c\|_1,
\end{equation}
where $L_f, L_c$ are defined in Assumptions~\ref{assume1}.

\begin{thm}   \label{mel}
Let $\theta_1$ be defined as in Corollary~\ref{suff},
 choose constants $\theta_2<\theta_1$,  $\nu \in (0,1)$ and
\begin{equation} \label{erdef}
 \epsilon_R \, {\geq} \, 2(\epsilon_f + \bar \pi \epsilon_c) \equiv 2 \epsilon_\phi.
\end{equation}
 Then, for all iterates $x_k$ {with $k\geq k_0$ } that satisfy both \eqref{critical11} and 
\begin{equation}  \label{critical2}
 (1-\nu)(\theta_1-\theta_2)  \left(\frac{1}{\beta_k} \|P_k g_k\|^2 +{\bar\pi}\tau\|c_k\|_1 \right)  >2\nu  \epsilon_\ell,
\end{equation} 
 if the steplength satisfies
 \begin{equation}  \label{alphabound}
	\alpha_k < \frac{(1-\nu) \theta_2 \left( {\frac{1}{\beta_k} }\|P_kg_k\|^2 +\bar \pi\tau\|c_k\|_1  \right) }{\frac{L}{2} [  \delta^2 (\|c_k\|_1+ \epsilon_c)^2 +  \frac{1}{\beta_k^2}(\|P_kg_k\| + \|g_k\|\eta\epsilon_J + \epsilon_g)^2 ] }  \equiv \hat{\alpha}_k,
\end{equation}
then
\begin{equation} \label{armijo-R}
\tilde \phi(x_k+\alpha_k  d_k) \leq \tilde \phi(x_k) +\nu \alpha_k \tilde \ell(x_k; d_k) + \epsilon_R .
\end{equation}
\end{thm}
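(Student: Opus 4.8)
The plan is to bound $\tilde\phi(x_k+\alpha_k d_k)$ by passing through the \emph{noiseless} merit function $\phi$, applying the descent lemma \eqref{bb}, and then converting back to noisy quantities using the two error gaps already at our disposal. Concretely, since $|\tilde f-f|\le\epsilon_f$ and $\|\tilde c-c\|_1\le\epsilon_c$, the two merit functions satisfy $|\tilde\phi(x)-\phi(x)|\le\epsilon_f+\bar\pi\epsilon_c=\epsilon_\phi$ at every point; I would use this twice, at $x_k+\alpha_k d_k$ and at $x_k$, to obtain
\[\tilde\phi(x_k+\alpha_k d_k)\le \phi(x_k+\alpha_k d_k)+\epsilon_\phi\le \tilde\phi(x_k)+\alpha_k\ell(x_k;d_k)+\tfrac12 L\alpha_k^2\|d_k\|^2+2\epsilon_\phi,\]
where the second inequality is \eqref{bb} combined with the convexity estimate $\ell(x_k;\alpha_k d_k)\le\alpha_k\ell(x_k;d_k)$, valid for $\alpha_k\in[0,1]$ (which holds because the backtracking search starts from $\alpha=1$). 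Since $\epsilon_R\ge 2\epsilon_\phi$ by \eqref{erdef}, the term $2\epsilon_\phi$ is absorbed into $\epsilon_R$, so it remains to show that $\alpha_k\ell(x_k;d_k)+\tfrac12 L\alpha_k^2\|d_k\|^2$ is dominated by $\nu\alpha_k\tilde\ell(x_k;d_k)$.

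First I would trade $\ell$ for $\tilde\ell$ via \eqref{bwon}, writing $\ell(x_k;d_k)\le\tilde\ell(x_k;d_k)+\epsilon_\ell$ and splitting $\alpha_k\ell=\nu\alpha_k\ell+(1-\nu)\alpha_k\ell$; the $\nu\alpha_k\ell$ piece produces the desired $\nu\alpha_k\tilde\ell(x_k;d_k)$ plus a leftover $\nu\alpha_k\epsilon_\ell$. Dividing the residual requirement by $\alpha_k>0$, and invoking Corollary~\ref{suff2} (available under hypothesis \eqref{critical11}) to replace $\ell(x_k;d_k)$ by $-\theta_1(\tfrac1{\beta_k}g_k^TP_kg_k+\bar\pi\tau\|c_k\|_1)$, the goal reduces to
\[\nu\epsilon_\ell-(1-\nu)\theta_1\Big(\tfrac1{\beta_k}\|P_kg_k\|^2+\bar\pi\tau\|c_k\|_1\Big)+\tfrac12 L\alpha_k\|d_k\|^2\le 0.\]

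Next I would split $\theta_1=\theta_2+(\theta_1-\theta_2)$. Hypothesis \eqref{critical2} says exactly that $(1-\nu)(\theta_1-\theta_2)(\tfrac1{\beta_k}\|P_kg_k\|^2+\bar\pi\tau\|c_k\|_1)>2\nu\epsilon_\ell$, which cancels the $\nu\epsilon_\ell$ term with room to spare, leaving the requirement $\tfrac12 L\alpha_k\|d_k\|^2\le(1-\nu)\theta_2(\tfrac1{\beta_k}\|P_kg_k\|^2+\bar\pi\tau\|c_k\|_1)$. This is precisely what the steplength bound \eqref{alphabound} delivers, provided $\|d_k\|^2$ is controlled by the denominator of $\hat\alpha_k$.

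The step I expect to be the crux is this last estimate of $\|d_k\|^2$. One must \emph{not} square the triangle-inequality bound \eqref{dkbound}, which would introduce spurious cross terms; instead the decomposition $d_k=v_k+u_k$ in \eqref{ddecomp} is \emph{orthogonal}, so $\|d_k\|^2=\|v_k\|^2+\|u_k\|^2$, and applying \eqref{vbound} (with $\|v_k\|\le\|v_k\|_1$) and \eqref{tanupperbound} termwise yields $\|d_k\|^2\le\delta^2(\|c_k\|_1+\epsilon_c)^2+\tfrac1{\beta_k^2}(\|P_kg_k\|+\|g_k\|\eta\epsilon_J+\epsilon_g)^2$, which matches the bracket in the denominator of $\hat\alpha_k$ exactly. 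With this, $\alpha_k<\hat\alpha_k$ forces $\tfrac12 L\alpha_k\|d_k\|^2<(1-\nu)\theta_2(\cdots)$, closing the chain and establishing \eqref{armijo-R}.
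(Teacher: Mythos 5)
Your proposal is correct and follows essentially the same route as the paper's own proof: pass to the noiseless merit function at the cost of $2\epsilon_\phi$, apply the descent lemma \eqref{bb} with the convexity estimate $\ell(x_k;\alpha d_k)\leq\alpha\,\ell(x_k;d_k)$, split off $\nu\alpha\,\ell$ to trade for $\nu\alpha\,\tilde\ell+\nu\alpha\,\epsilon_\ell$ via \eqref{bwon}, invoke Corollary~\ref{suff2} and condition \eqref{critical2} to shift $\theta_1$ to $\theta_2$, and control $\|d_k\|^2$ through the orthogonality $\|d_k\|^2=\|v_k\|^2+\|u_k\|^2$ rather than by squaring \eqref{dkbound} --- exactly the step the paper flags with ``the last line following from the orthogonality of the components.'' The only cosmetic difference is that the paper carries a term $2\nu\alpha\,\epsilon_\ell$ where you carry $\nu\alpha\,\epsilon_\ell$, which \eqref{critical2} covers in either case.
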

\begin{proof}
By  the definition \eqref{erdef} of $\epsilon_\phi$,  \eqref{bb}, \eqref{ltotal},  {the convexity of $ \ell (x_k; \cdot)$, (\ref{bwon}),}
 (\ref{lde2}), the fact that  $P_k^2=P_k$, and \eqref{dkbound},  we get
\begin{align*}
\tilde \phi(x_k+\alpha d_k) - \tilde \phi(x_k) \leq & \, \phi(x_k+\alpha d_k) - \phi(x_k) + 2 \epsilon_\phi   \\
 \leq & \,   \ell (x_k;\alpha d_k) + 2 \epsilon_\phi+ \tfrac{L}{2}\alpha^2\|d_k\|^2  \\
 \leq & \,  \alpha \ell (x_k; d_k) + 2 \epsilon_\phi+ \tfrac{L}{2}\alpha^2\|d_k\|^2  \\
= & \, {\nu \alpha  \ell(x_k; d_k) + 2 \epsilon_\phi + (1-\nu) \alpha \ell(x_k;d_k)   + \tfrac{L}{2}\alpha^2\|d_k\|^2}   \\
\leq & \, {\nu \alpha  \tilde \ell(x_k; d_k)+\nu \alpha \epsilon_{\ell}  + 2 \epsilon_\phi+ (1-\nu) \alpha \ell(x_k;d_k)   + \tfrac{L}{2}\alpha^2\|d_k\|^2}  \\
 \leq &\,  \nu \alpha  \tilde \ell (x_k; d_k) +2 \epsilon_\phi+2\nu \alpha \epsilon_{\ell} - (1-\nu) 
\theta_1 \alpha \Big(\frac{1}{\beta_k} g_k^TP_k g_k +{\bar\pi}\tau\|c_k\|_1 \Big)      \\
& + \tfrac{L}{2}\alpha^2\|d_k\|^2 \\
\leq & \, \nu \alpha \tilde \ell (x_k; d_k) +2 \epsilon_\phi+2\nu \alpha \epsilon_{\ell} - (1-\nu) 
\theta_1 \alpha \Big(\frac{1}{\beta_k} \|P_k g_k\|^2 +{\bar\pi}\tau\|c_k\|_1 \Big) \\
 &+\tfrac{L}{2}\alpha^2 \Big[  \delta^2 (\|c_k\|_1+ \epsilon_c)^2 +  \frac{1}{\beta_k^2}\big(\|P_k g_k\| + \|g_k\|\eta\epsilon_J + \epsilon_g\big)^2 \Big] ,
\end{align*}
the last line following from the orthogonality of the components \eqref{ddecomp} of $d_k$.

Now we choose a constant $\theta_2<\theta_1$, and consider iterates $x_k$ such that \eqref{critical2} holds.
For such iterates we have,
\begin{align*}
\tilde \phi(x_k+\alpha d_k) {- \tilde \phi(x_k) }\leq & \nu \alpha  \tilde \ell(x_k; d_k) +2 \epsilon_\phi - (1-\nu) 
\theta_2 \alpha \Big( { \frac{1}{\beta_k} }\|P_k g_k\|^2 +\bar\pi\tau\|c_k\|_1  \Big) \\
 &+\tfrac{L}{2}\alpha^2 \left[  \delta^2 (\|c_k\|_1+ \epsilon_c)^2 +  \frac{1}{\beta_k^2}\Big(\|P_k g_k\| + \|g_k\|\eta\epsilon_J + \epsilon_g\Big)^2 \right] .
\end{align*}

Then, for any steplength satisfying \eqref{alphabound}
where $x_k$ satisfies the \eqref{critical1} and \eqref{critical2}, 
we have
\[
\tilde \phi(x_k+\alpha d_k) - \tilde \phi(x_k) \leq  \nu \alpha  \tilde l(x_k;d_k) +2 \epsilon_\phi ,
\]
and thus (\ref{armijo-R}) holds since $\epsilon_R \,{\geq 2}\epsilon_\phi$.
\end{proof}
 
Note that condition \eqref{critical2} is implied by the slightly weaker inequality
\begin{equation}   \label{critical22}
(1-\nu)(\theta_1-\theta_2)  \psi_{\bar \pi}(x_k)  >2\nu  \epsilon_\ell.
\end{equation}

Since the numerator in \eqref{alphabound} is bounded away from zero by \eqref{critical2}, and the denominator is bounded above given the assumed global upper bounds on $c_k, g_k,$ and lower bound on $\beta_k$ stated in Assumptions~\ref{assume3}, it follows     that  there is a constant $\bar \alpha$ such that $\hat \alpha_k > 2 \bar \alpha$ for all $k \geq k_0$. The algorithm employs a backtracking line search that halves each trial step, hence we  can conclude that
\begin{equation}  \label{lowalpha}
       \bar \alpha \leq \alpha_k, \qquad \mbox{for} \ k \geq k_0.
\end{equation}
This will allow us to show that, when the conditions in Theorem~\ref{mel} are satisfied, the algorithm will make non-negligible progress.

 \subsection{The Main Convergence Result}
Now we show that  Algorithm~\ref{algorithm} will eventually generate iterates close to a stationary point of the problem, as measured by the function $\psi_{\bar \pi}(x)$ defined in \eqref{psi}. To do so, we note that condition \eqref{critical11} implies that the {linear model decrease $\ell$ is sufficiently negative in the sense of \eqref{lde2},} and we have established a bound in \eqref{bwon} for the distance between $\ell$ and $\tilde \ell$. Furthermore, we have shown that condition \eqref{critical22} ensures that the relaxed Armijo condition  \eqref{armijo-R} is satisfied for steplengths $\alpha_k$ that are bounded away from zero. Those two conditions---\eqref{critical11}, \eqref{critical22}---are necessary to ensure that the algorithm makes significant progress,  but they are not sufficient. To control the { effect of noise in testing \eqref{relaxed} as well as the {effect of the} relaxation factor}, we  impose one additional condition,
\begin{equation}   \label{critical3}
\psi_{\bar \pi}(x_k) \geq \frac{2\epsilon_R + 4 \epsilon_\phi }{ \nu \bar \alpha \theta_2} ,
\end{equation}
to help define the region where Algorithm~\ref{algorithm} progresses toward stationarity. 

One more refinement is needed. The definition of the term $E(x_k, \beta_k, \pi_k)$ defined in \eqref{epsilonbar} involves $c(x_k)$ and $g(x_k)$, which makes the region defined by \eqref{critical11} difficult to interpret. Therefore, we compute an upper bound for $E$. If we define
\begin{align} 
{\cal E} =& 
\frac{1}{b_l}( C_g^2\eta\epsilon_J + \epsilon_gC_g)  +{\epsilon_g}{\delta}{(C_c + \epsilon_c)}   \nonumber  \\
  +& \bar \pi \left[{(2-\tau)\epsilon_c}  + \epsilon_J \left(\delta (C_c + \epsilon_c)+ \frac{1}{b_l} (C_g + C_g\eta\epsilon_J + \epsilon_g) \right)   \right] ,\label{ebar}
\end{align}
where $C_g, C_c$ are given in \eqref{Cs},
then we have that ${E}(x_k,\beta_k, \bar \pi) \leq {\cal E}$ for all $k \geq k_0$.
 We can thus state a  condition that implies \eqref{critical11}:
\begin{equation}   \label{critical111}
\psi_{{\bar \pi}}(x_k)  \geq  \frac{{\cal E}}{(1-\theta_1)}, \qquad  \forall k\geq k_0.
\end{equation}

In summary, the analysis presented above holds if conditions \eqref{critical111}, \eqref{critical22} are satisfied and we also impose condition \eqref{critical3}. This allows us to characterize a region, which we denote by $C_1$, where errors dominate and improvement in the  merit function $\phi$ cannot be guaranteed. In other words, $C_1$ is  the region where at least {one} of the three conditions---\eqref{critical111}, \eqref{critical22}, \eqref{critical3}---is not satisfied. 
{
\begin{Def}  \label{defcritical}
The critical region $C_1$ is defined as the set of $x \in \mathbb{R}^n$ satisfying
\begin{align}
\psi_{\bar \pi}(x)  \leq 
 & \max{ 
 \left\{ \frac{{\cal E} }{(1-\theta_1)}, \frac{2{\nu} \epsilon_{\ell}}{({1-\nu})(\theta_1-\theta_2) }, \frac{2\epsilon_R + 4 \epsilon_\phi }{ \nu \bar \alpha \theta_2} \right\}, \label{kind}
}
\end{align}
where $ \cal E$ and $\epsilon_\ell$ are defined by \eqref{ebar} and \eqref{bwon}, respectively, and $\theta_1, \theta_2$ are constants such that
$0 <\theta_2 < \theta_1 <1$.
\end{Def}

\medskip
We also define the following   set.

\begin{Def}  \label{supremum}
Let $w=\sup\{\phi(x): x\in C_1\}$, and define the level set 
\[
	C_2= \{x: \phi(x)\leq w +2\epsilon_\phi +\epsilon_R\}  .
\]
\end{Def}
 \medskip
Note that by construction $C_1 \subseteq C_2$.
We are now ready to state the main convergence result. 
\begin{thm}  \label{mainth}
Suppose that Algorithm~\ref{algorithm}  generates a sequence $\{x_k\}$ from $x_0$ satisfying  Assumptions~\ref{assume1}-\ref{assume3}. 
There is an iteration $k_1$ at which $\{x_k\}$  enters the critical region $C_1$, and for all $k >k_1$ the iterates remain in the critical {level set} $C_2$. The iterates may leave $C_1$,   but there must be infinitely many iterates in $C_1$.  
\end{thm}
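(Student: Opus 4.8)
The plan is to reduce everything to a single \emph{sufficient-decrease} lemma: whenever $x_k$ lies outside $C_1$—so that all three defining inequalities \eqref{critical111}, \eqref{critical22}, \eqref{critical3} hold—the true merit function $\phi$ strictly decreases by at least a fixed amount $\rho>0$, e.g.\ $\rho=2\epsilon_\phi+\epsilon_R$. Granting this lemma, the three assertions of the theorem follow by soft arguments, so I would establish the lemma first and then assemble the conclusions. Throughout I work with $k\ge k_0$, so that $\pi_k=\bar\pi$ by \eqref{k0} and \eqref{pichoice} holds.

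For the lemma I would start from the relaxed Armijo inequality \eqref{armijo-R}, which the algorithm enforces at $x_k$, together with the lower bound $\alpha_k\ge\bar\alpha$ from \eqref{lowalpha} (valid because \eqref{critical22} implies \eqref{critical2}, so Theorem~\ref{mel} applies). Passing from $\tilde\phi$ to $\phi$ costs $2\epsilon_\phi$ since $|\tilde\phi-\phi|\le\epsilon_\phi$, giving $\phi(x_{k+1})\le\phi(x_k)+2\epsilon_\phi+\epsilon_R+\nu\alpha_k\tilde\ell(x_k;d_k)$. The task is then to make $\nu\alpha_k\tilde\ell(x_k;d_k)$ sufficiently negative, and the three conditions each neutralize one error source: \eqref{critical111} feeds Corollary~\ref{suff2} to give $\ell(x_k;d_k)\le-\theta_1\psi_{\bar\pi}(x_k)$; the gap bound \eqref{bwon}, $|\tilde\ell-\ell|\le\epsilon_\ell$, combined with \eqref{critical22}, upgrades this to $\tilde\ell(x_k;d_k)\le-\theta_2\psi_{\bar\pi}(x_k)<0$; and finally \eqref{critical3}, which reads $\nu\bar\alpha\theta_2\psi_{\bar\pi}(x_k)\ge 2\epsilon_R+4\epsilon_\phi$, guarantees that the gain $-\nu\bar\alpha\theta_2\psi_{\bar\pi}(x_k)$ dominates the residual $2\epsilon_\phi+\epsilon_R$, leaving a net decrease of at least $\rho$.

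With the lemma in hand, the structure is threefold. \textbf{Entering $C_1$:} if $x_k\notin C_1$ for every $k\ge k_0$, the lemma forces $\phi(x_k)\to-\infty$, contradicting the boundedness of $\{f_k\}$ and $\{\|c_k\|_1\}$ in Assumption~\ref{assume3} (which bounds $\phi_k$ below); hence some $x_{k_1}\in C_1$. \textbf{Invariance of $C_2$:} I would argue by induction from $x_{k_1}\in C_1\subseteq C_2$. If $x_k\in C_1$ then $\phi(x_k)\le w$, and the one-step bound $\phi(x_{k+1})\le\phi(x_k)+2\epsilon_\phi+\epsilon_R$—using only the descent property $\tilde\ell(x_k;d_k)\le0$ that follows from the penalty choice \eqref{pichoice}—places $x_{k+1}\in C_2$; if instead $x_k\in C_2\setminus C_1$, the decrease lemma gives $\phi(x_{k+1})<\phi(x_k)\le w+2\epsilon_\phi+\epsilon_R$, so again $x_{k+1}\in C_2$. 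The buffer width $2\epsilon_\phi+\epsilon_R$ built into Definition~\ref{supremum} is exactly what makes this set forward-invariant. \textbf{Infinite recurrence:} if only finitely many iterates lay in $C_1$, then $x_k\notin C_1$ for all large $k$, and the decrease lemma once more drives $\phi(x_k)\to-\infty$, a contradiction.

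The main obstacle is the decrease lemma, and within it the step $\tilde\ell(x_k;d_k)\le-\theta_2\psi_{\bar\pi}(x_k)$: one must verify that the three error contributions (through ${\cal E}$, through $\epsilon_\ell$, and through $2\epsilon_\phi+\epsilon_R$) are \emph{simultaneously} dominated, uniformly in $k\ge k_0$, with the specific constants $\theta_1,\theta_2,\nu,\bar\alpha$. This is constant-chasing, but delicate, because the relaxation term and the $\ell$–$\tilde\ell$ gap enter multiplicatively through $\alpha_k$; one must take care that lower-bounding $\alpha_k$ by $\bar\alpha$ is applied only to genuinely negative quantities, and that the interplay between the $\theta_1$ bound from Corollary~\ref{suff2} and the $\theta_2$-calibrated thresholds in \eqref{critical22}–\eqref{critical3} leaves positive slack. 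Everything else—the two contradiction arguments and the $C_2$-invariance induction—is routine once the lemma is in place.
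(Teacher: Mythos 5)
Your proposal is correct and takes essentially the same route as the paper: the paper's proof establishes precisely your decrease lemma---for $k\ge k_0$ with $x_k\notin C_1$, the chain relaxed Armijo $\rightarrow$ $|\tilde\phi-\phi|\le\epsilon_\phi$ $\rightarrow$ $|\tilde\ell-\ell|\le\epsilon_\ell$ $\rightarrow$ Corollary~\ref{suff2} $\rightarrow$ \eqref{critical3} yields $\phi(x_{k+1})-\phi(x_k)\le-\tfrac{\nu\bar\alpha\theta_2}{2}\psi_{\bar\pi}(x_k)$, which by \eqref{critical3} is at most $-(2\epsilon_\phi+\epsilon_R)$, your $\rho$---and then draws the three conclusions exactly as you do, including the two-case forward-invariance of $C_2$ (where your explicit justification that $\tilde\ell(x_k;d_k)\le 0$ follows from the penalty rule is a detail the paper leaves implicit). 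The only cosmetic differences are organizational: the paper argues entry into $C_1$ via $\psi_{\bar\pi}(x_k)\to 0$ rather than your $\phi\to-\infty$ contradiction (equivalent arguments), and your worry about the $\theta_1$-to-$\theta_2$ slack against $\epsilon_\ell$ via \eqref{critical22} is present in the paper's own chain as well.
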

\begin{proof}
%
Recall that the index $k_0$ is defined in \eqref{k0}.
If $k \not \in C_1$ and $k \geq k_0$, then the assumptions of Theorem~\ref{mel} are satisfied and \eqref{armijo-R} holds. Therefore, by
\eqref{lowalpha}, \eqref{bwon}, \eqref{ldecrease},  \eqref{lde2}, \eqref{psi}, \eqref{critical2}
\begin{align}
 \phi(x_k+\alpha_k d_k) -  \phi(x_k) \leq &\,  {\tilde \phi(x_k+ \alpha_k d_k) -  \tilde \phi(x_k) +  2\epsilon_\phi} \\
\leq &\, \nu \bar \alpha  \tilde \ell_\phi(x_k; d_k) + 2\epsilon_\phi + \epsilon_R\\
\leq  &\, \nu  \bar \alpha  \ell_\phi(x_k; d_k)+ \nu \bar \alpha \epsilon_{\ell} +2 \epsilon_\phi +  \epsilon_R \nonumber \\
\leq  &-\nu \bar \alpha \theta_1\big( \frac{1}{\beta_k}g_k^T P_k g_k +\tau \bar \pi \| c_k \|_1\big) + \nu \bar \alpha \epsilon_{\ell} + 2 \epsilon_\phi +\epsilon_R  \nonumber  \\
\leq  &-\nu \bar \alpha \theta_1 \psi_{\bar \pi}(x_k)+ \nu \bar \alpha \epsilon_{\ell} + 2 \epsilon_\phi +\epsilon_R  \nonumber  \\
=  &-[\nu \bar \alpha \theta_2 + \hat\alpha \nu (\theta_1-\theta_2)]  \psi_{\bar \pi}(x_k) + \nu \bar \alpha \epsilon_{\ell} + 2 \epsilon_\phi +\epsilon_R  \nonumber  \\
\leq  &-\nu \bar \alpha \theta_2 \psi_{\bar \pi}(x_k)   + 2 \epsilon_\phi  + \epsilon_R .\label{lsdecrease}
\end{align} 
Combining this bound with \eqref{critical3}, we have that if $x_k \notin C_1$ then 
\begin{equation} \label{bikn}
\phi(x_{k+1}) -  \phi(x_k)  \leq -\frac{\nu \bar \alpha \theta_2}{2} \psi_{\bar \pi}(x_k) .
\end{equation}
Since the sequence $\{ \phi (x_k) \}$ is bounded below by Assumptions~\ref{assume3}, $\psi_{\bar \pi}(x_k) $ converges to zero and thus it follows that  Algorithm~\ref{algorithm} eventually generates an iterate in $C_1$.

Now if $x_k \in C_1$, then by Step~6 in Algorithm~\ref{algorithm},  
$ \phi(x_{k+1}) \leq \phi(x_k) +2\epsilon_\phi +\epsilon_R \leq w+2\epsilon_\phi +\epsilon_R$, so that  $x_{k+1} \in C_2.$ 

On the other hand, if $x_k \in C_2 \setminus C_1$, then by \eqref{bikn}
\[
\phi(x_{k+1}) -  \phi(x_k)  \,  \leq \,  0 ,
\]
which implies $x_{k+1} \in C_2$. 
Thus the rest of the sequence lies in $C_2$, with infinitely many iterates in $C_1$.
\end{proof}

We should note that since we are not assuming that the objective function is strongly convex or satisfies a quadratic growth condition, it is possible that the supremum in Definition~\ref{supremum} is $w=\infty$. This is, however, an unlikely scenario.

\subsection{Discussion}

Let us take a closer look the main result of this paper, Theorem~\ref{mainth}, since  the critical region $C_1$ defined in \eqref{kind} is complex. 

By the definitions \eqref{ebar} and \eqref{bwon}, we have that  $\cal E$ and $\epsilon_\ell$ are both of order $O(\epsilon_c, \epsilon_g,\epsilon_J$), and so is the right hand side in \eqref{kind}. This is as desired. The constants in these orders of magnitude matter, so we must characterize them.

First note that the critical region $C_1$, the set $ C_2$ and $\bar \pi$ depend on the starting point $x_0$. It is then possible that $\bar \pi$ could be very large in some cases, although in practice this does not seem to be a major concern. The constants $C_g, C_c$, which also enter in the definition of $\cal E$ and $\epsilon_\ell$ could  be quite large. One can, however, give a tighter definition of $C_1$ by not introducing these constants. In this case, we would define $\epsilon_\ell$ by \eqref{bwon0} and employ \eqref{critical11}, rather than \eqref{critical111}. This makes the main theorem more precise, albeit more difficult to interpret.

Returning to the constants in \eqref{ebar} and \eqref{bwon}, we have that
\[
 \epsilon_\ell, {\cal E}  \sim \left[\delta, \, \frac{1}{b_l}, \, \frac{\eta}{b_l}\right] , 
\]
and from \eqref{singv}, \eqref{mudef}, \eqref{eta} we observe that
\[ 
\sigma_{min}(\tilde J_k) \geq \gamma, \qquad
 \delta= \frac{1}{\gamma- \epsilon_J} \geq \frac{1}{\sigma_{min}(\tilde J_k) -\epsilon_J}, \qquad \mbox{and} \
 \eta=\frac{1}{\gamma}= \frac{1}{\sigma_{min}(\tilde J_k)}.
 \]
The effect of a near rank-deficient Jacobian and  Hessian approximations $\beta_k I$ are now apparent.

 It is interesting to compare $C_1$ with the region obtained by Berahas et al. \cite{berahas2019derivative} for unconstrained strongly convex optimization. {When constraints are not present, i.e., $m=0$, conditions \eqref{critical1} and \eqref{critical2} defining $C_1$ reduce to requirements of form $\|g_k\|\geq c_1\epsilon_g$ and $\|g_k\|^2\geq c_2 (\epsilon_g\|g_k\|+\epsilon_g^2)$ for some constants $c_1$ and $c_2$, respectively.  That  corresponds to \emph{Case 1} in the analysis of \cite{berahas2019derivative}, in which case $\epsilon_g$ is small as compared to $\|g_k\|$ by some factor $\beta\in(0,1)$, so that the line search ensures an improvement in the exact objective function -- $f(x)$ in our notation.  Similar to the setting in this paper, \cite{berahas2019derivative} employs a relaxed line search which does not fail even in the critical region; that is, when $\|g_k\|\leq \beta \epsilon_g$.  Their analysis then provides a level set that the iterates cannot leave, which depends on the relaxation term $\epsilon_R$ as well as $\epsilon_\phi$ (i.e. $\epsilon_f$ in the unconstrained case) as in the definition of $C_2$ in our analysis.  Since strong convexity is assumed in \cite{berahas2019derivative}, they can define this level set in terms of a strong convexity parameter rather than a bound such as $w$ in Definition~\ref{supremum}.}

%
%
%


\section{Numerical Experiments}   \label{numerical}

We implemented Algorithm~\ref{algorithm} in Python.  We set $\nu=0.1$, $\tau=0.9$, and $\beta_k=50$, for all $ k$.  The purpose of the numerical experiments is to supplement the theoretical results, which are stated in terms of the merit function $\phi$, by reporting the distance to the solution $\| x_k - x^\ast\|$ as the iteration progresses. In order to gain an idea of this behavior, it suffices to test only a few examples. We selected the following three small-scale equality-constrained problems from the CUTEst set \cite{gould2015cutest}.

\begin{table}[htp]
\centering
\begin{tabular}{ r | l | l | l  }
\hline
problem & classification & objective & constraints\\
\hline
HS7 & OOR2-AN-2-1 & $\ln(1+x_1^2)-x_2$ & $(1+x_1^2)^2+x_2^2=4$\\
\hline
BT11 & OOR2-AN-4-3 & $-x_1x_2x_3x_4$ & $\begin{aligned}x_1^3 + x_2^2 &=1\\ x_1^2x_4 - x_3 &= 0\\ x_4^2-x_2 &= 0\end{aligned}$ \\
\hline
HS40 & OOR2-AY-5-3 & $\begin{aligned}(x_1-1)^2 + (x_1-x_2)^2 + (x_2-x_3)^2\\ +(x_3-x_4)^4 +(x_4-x_5)^4 \end{aligned}$ & $\begin{aligned} x_1+x_2^2+x_3^3 &= -2+\sqrt{18}\\ x_2+x_4+x_3^2 &= -2+\sqrt{8}\\ x_1-x_5&=2 \end{aligned}$ \\
\hline
\end{tabular}
\end{table}

\bigskip\noindent
We add uniformly distributed random noise to the exact function values and to each component of the exact gradients; i.e., for $\xi_i \sim \mathcal{U}(-\epsilon_1,\epsilon_1)$, and $\psi_{ij}\sim \mathcal{U}(-\epsilon_2,\epsilon_2)$ we set
\begin{align*}
\tilde f(x) &= f(x)+\xi_0,  \qquad ~~~\tilde c_i(x) = c_i(x)+\xi_i\\
\tilde g_i(x) &= g_i(x)+\psi_{0j}, \qquad \tilde J_{ij}(x) = J_{ij}(x)+\psi_{ij}.
\end{align*} 
In our tests, we vary $\epsilon_1, \epsilon_2$, and report  $\|x_k-x^\ast\|$, where $x^\ast$ is a locally optimal solution obtained by using exact gradients in the algorithm. For each of these problems, $x^\ast$ is a a nondegenerate stationary point.

\paragraph{Asymptotic Behavior.}
  In Figure~\ref{fig:converge}, we plot $\|x_k-x^\ast\|$ for 1000 iterations, for $\epsilon_1=\epsilon_2=10^{-3}$ in the definitions of $\xi_i $, and $\psi_{ij}$
  We also display the values of $\epsilon_f, \epsilon_c, \epsilon_g, \epsilon_J$ defined in \eqref{noise1}-\eqref{noise2} We should note that in each of the runs the penalty parameter $\pi_k$ became fixed within the first 15 iterations. We observe that $\{\|x_k -x^\ast\|\}$ is contained in a band whose upper bound is frequently visited by the algorithm, whereas the lower bound is defined by large irregular spikes. These results suggest that if one desires the highest accuracy in the solution, the algorithm should continue beyond the point where oscillations in the merit function occur, since there is little risk that the iterates will stray away from the neighborhood of the solution, and there is a chance that significantly higher accuracy is achieved at some iterates.

\begin{figure}[H]
\caption{Distance to optimality ($\log_2(\|x_k-x^\ast\|)$) vs iteration number for $\epsilon_1=\epsilon_2=10^{-3}$}
\centering
\begin{subfigure}{\textwidth}
\includegraphics[width=\textwidth]{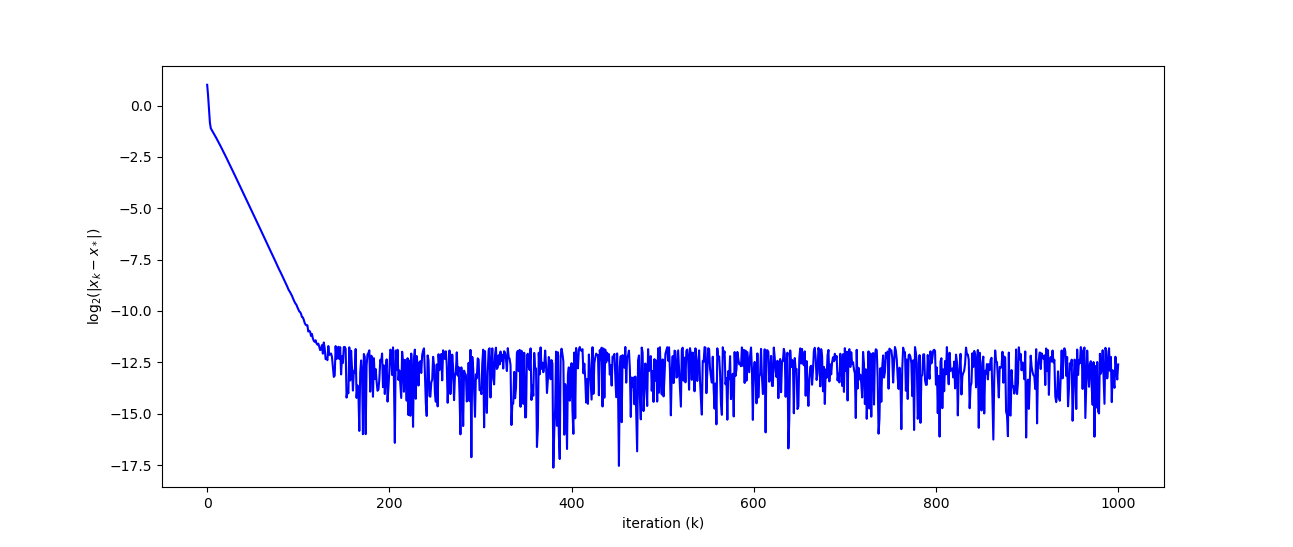}
\caption{HS7.  $\epsilon_f=10^{-3}, \epsilon_c=10^{-3}, \epsilon_g=1.41\times 10^{-3}, \epsilon_J=1.41\times 10^{-3}$ }
\end{subfigure}\\
\begin{subfigure}{\textwidth}
\includegraphics[width=\textwidth]{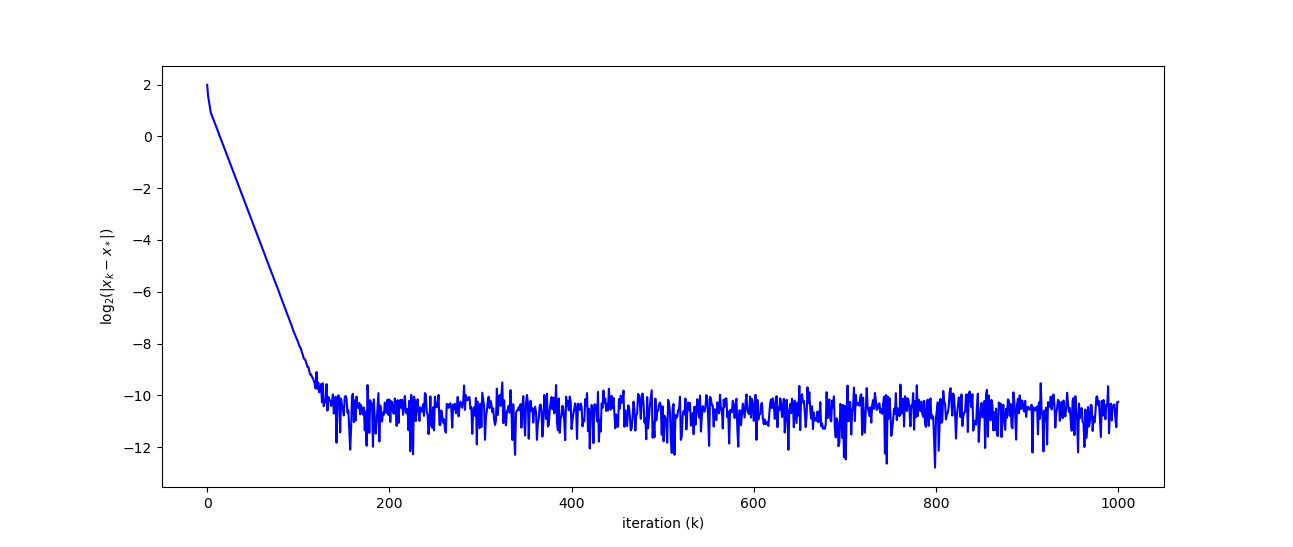}
\caption{BT11.  $\epsilon_f=10^{-3}, \epsilon_c=3\times 10^{-3}, \epsilon_g=2.24\times 10^{-3}, \epsilon_J=6.71\times 10^{-3}$}
\end{subfigure}
\begin{subfigure}{\textwidth}
\includegraphics[width=\textwidth]{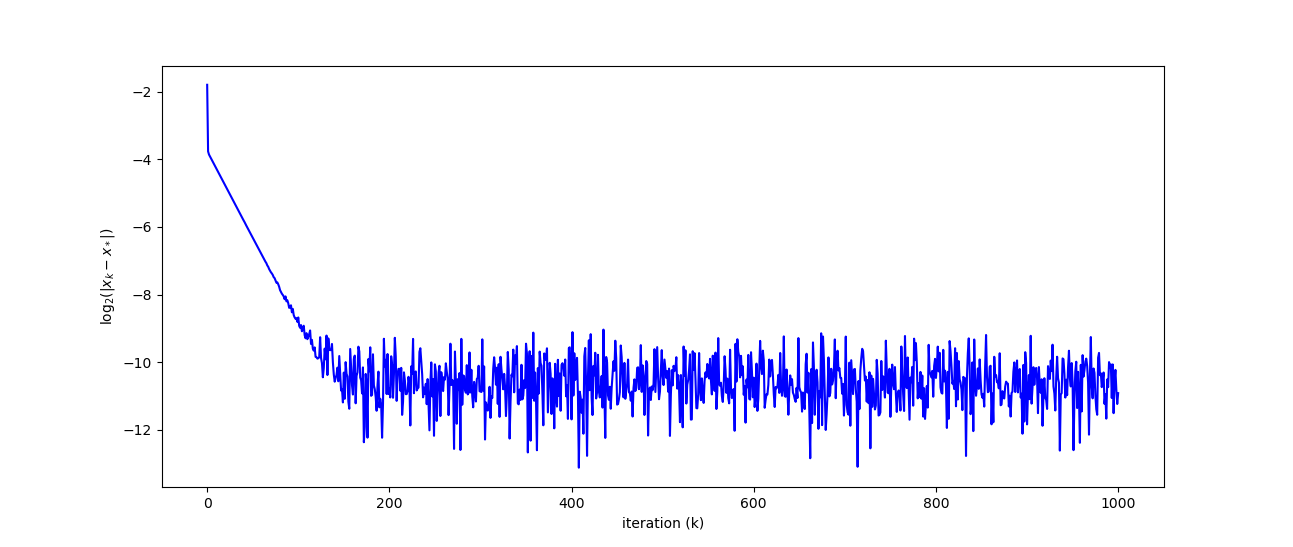}
\caption{HS40.  $\epsilon_f=10^{-3}, \epsilon_c=3 \times 10^{-3}, \epsilon_g=2\times 10^{-3}, \epsilon_J=6\times 10^{-3}$}
\end{subfigure}
\label{fig:converge}
\end{figure}

\bigskip


\paragraph{Benefits of the relaxed line search.}  The only unconventional part of Algorithm~\ref{algorithm} is the relaxed line search \eqref{relaxed}.  To observe the effect of the relaxation, we solved the test problems with and without it; the results are reported in Tables~\ref{relaxation1}--\ref{relaxation3}.  We observe that when the relaxation is disabled, the line search often fails in a neighborhood of $x^\ast$ (we terminate the algorithm as soon as there is a line search failure).  When the relaxation is enabled, the line search is always successful.  In this case, we let the algorithm run for 100, 500, and 1000 iterations.   
It is apparent that the relaxed line search allows the algorithm to continue iterating past the point where the traditional line search would fail, yielding much better accuracy in the solution. 

\medskip

\begin{table}[h]
\caption{$\min_k\{\|x_k-x^\ast\|\}$ when $\epsilon_1=\epsilon_2=10^{-5}$}
\begin{tabular}{r | c | c | c | c | c }
\hline
& \multicolumn{2}{c}{relaxation disabled} & \multicolumn{3}{|c}{relaxation enabled} \\
\hline
problem & iter. of failure & $\min_k\{\|x_k-x^\ast\|\}$ & $k_{\max}=100$ & $k_{\max}=500$ & $k_{\max}=1000$ \\
\hline
HS7 & 77 & 7.8260E-3 & 1.0234E-3 & 4.9413E-8 & 4.9413E-8\\
BT11 & 64 & 4.8346E-2 & 3.9258E-3 & 1.9791E-6 & 1.4133E-6\\
HS40 & 26 & 3.4728E-2 & 2.1251E-3 & 1.09888E-6 & 1.0988E-6\\
\hline
\end{tabular} \label{relaxation1}
\end{table}

\begin{table}[h]
\caption{$\min_k\{\|x_k-x^\ast\|\}$ when $\epsilon_1=\epsilon_2=10^{-3}$}
\begin{tabular}{r | c | c | c | c | c }
\hline
& \multicolumn{2}{c}{relaxation disabled} & \multicolumn{3}{|c}{relaxation enabled} \\
\hline
problem & iter. of failure & $\min_k\{\|x_k-x^\ast\|\}$ & $k_{\max}=100$ & $k_{\max}=500$ & $k_{\max}=1000$ \\
\hline
HS7 & 42 & 8.0390E-2 & 1.0401E-3 & 4.9328E-6 & 4.9328E-6\\
BT11 & 18 & 9.3324E-1 & 4.0003E-3 & 1.9804E-4 & 1.4060E-4\\
HS40 & 6 & 6.4144E-2 & 2.2293E-3 & 1.1183E-4 & 4.9328E-6\\
\hline
\end{tabular}\label{relaxation2}
\end{table}

\begin{table}[h]
\caption{$\min_k\{\|x_k-x^\ast\|\}$ when $\epsilon_1=\epsilon_2=10^{-1}$}
\begin{tabular}{r | c | c | c | c | c }
\hline
& \multicolumn{2}{c}{relaxation disabled} & \multicolumn{3}{|c}{relaxation enabled} \\
\hline
problem & iter. of failure & $\min_k\{\|x_k-x^\ast\|\}$ & $k_{\max}=100$ & $k_{\max}=500$ & $k_{\max}=1000$ \\
\hline
HS7 & 10 & 3.7404E-1 & 1.3113E-3 & 4.5607E-4 & 2.5422E-4\\
BT11 & 8 & 1.7108 & 2.0598E-2 & 2.0598E-2 & 1.9451E-2 \\
HS40 & 2 & 1.1817E-1 & 5.8202E-2 & 3.8673E-2 &  3.8673E-2\\
\hline
\end{tabular}\label{relaxation3}
\end{table}

\paragraph{Effect of incorrect noise level estimations.}  In Algorithm 1, estimations of $\epsilon_f$ and $\epsilon_c$ are needed to set the relaxation bound $\epsilon_R$ in \eqref{relaxed}.  It is clear that underestimating the noise level can cause failure of the relaxed line search, which never fails when the true level (or an overestimation) is provided. On the other hand, overestimation 
can lead to large oscillations. The precise behavior of the algorithm will depend on the stop test, and there is no universally adopted stopping criterion in the noisy setting, to our knowledge.

Nevertheless, we performed the following experiments using a stop test that that could be considered as a naive modification of termination tests in standard packages.  We simply terminate the algorithm when the observed (noisy) feasibility and optimality errors are smaller than the (estimated) noise provided for these quantities, i.e.,
\begin{equation} \label{optterm}
\|\tilde c(x_k)\|_1 \leq \epsilon_c^{est} \quad \mbox{and} \quad \|\tilde g(x_k) + \tilde J(x_k)^T\lambda_k\| \leq \epsilon_g^{est} + \|\lambda_k\|_\infty\epsilon_J^{est}.
\end{equation}
Figures~\ref{estimation1}--\ref{estimation3} report the quantity $\min_k\{\|x_k-x^\ast\|\}$ when the algorithm employs estimated noise levels $\epsilon_1^{est}$ and $\epsilon_2^{est}$ that are 10, 100 and 1000 times larger or smaller than the correct values. We perform this experiment for $\epsilon_i = 10^{-1}, 10^{-3}, 10^{-5}$.  A termination due to the satisfaction of the  condition \eqref{optterm} is marked with \emph{(opt)}, and a line search failure is marked with \emph{(ls)}.

\begin{table}[H]
\caption{$\min_k\{\|x_k-x^\ast\|\}$ when true $\epsilon_i=10^{-5}; \  i=1,2$}
\begin{tabular}{r | c | c | c | c | c | c}
\hline
& \multicolumn{2}{c}{$\epsilon_i^{est} = \epsilon_i$} & \multicolumn{2}{|c}{$\epsilon_i^{est} = 0.001\epsilon_i$} & \multicolumn{2}{|c}{$\epsilon_i^{est} = 1000\epsilon_i$}\\
\hline
problem & iter.  & {\small$\min_k\{\|x_k-x^\ast\|\}$} & iter.  & {\small$\min_k\{\|x_k-x^\ast\|\}$} & iter. & {\small$\min_k\{\|x_k-x^\ast\|\}$} \\
\hline
HS7 & 188 (opt) & 3.2017E-6 &  69 (ls) & 8.8000E-3 &  74 (opt) & 5.5704E-3 \\
BT11 & 233 (opt) & 2.4010E-6 & 64 (ls) & 4.8346E-2 & 64 (opt) & 4.0112E-2 \\
HS40 & 2703 (opt) & 8.0766E-7 &  26 (ls) & 3.4728E-2 & 27 (opt) & 2.8305E-2 \\
\hline
\end{tabular}\label{estimation3}
\end{table}

\begin{table}[H]
\caption{$\min_k\{\|x_k-x^\ast\|\}$ when true $\epsilon_i=10^{-3}; \  i=1,2$}
\begin{tabular}{r | c | c | c | c | c | c}
\hline
& \multicolumn{2}{c}{$\epsilon_i^{est} = \epsilon_i$} & \multicolumn{2}{|c}{$\epsilon_i^{est} = 0.01\epsilon_i$} & \multicolumn{2}{|c}{$\epsilon_i^{est} = 100\epsilon_i$}\\
\hline
problem & iter.  & {\small$\min_k\{\|x_k-x^\ast\|\}$} & iter.  & {\small$\min_k\{\|x_k-x^\ast\|\}$} & iter. & {\small$\min_k\{\|x_k-x^\ast\|\}$} \\
\hline
HS7 & 117 (opt) & 3.5750E-4 & 42 (ls) & 8.0390E-2 & 39 (opt) & 5.4414E-2 \\
BT11 & 149 (opt) & 2.7466E-4 & 29 (ls) & 5.3925E-1 & 22 (opt) & 5.9597E-1 \\
HS40 & 154 (opt) & 4.2653E-4 & 7 (ls) & 6.4142E-2 & 2 (opt) & 6.9002E-2 \\
\hline
\end{tabular}\label{estimation2}
\end{table}

\begin{table}[H]
\caption{$\min_k\{\|x_k-x^\ast\|\}$ when true $\epsilon_i=10^{-1}; \  i=1,2$}
\begin{tabular}{r | c | c | c | c | c | c}
\hline
& \multicolumn{2}{c}{$\epsilon_i^{est} = \epsilon_i$} & \multicolumn{2}{|c}{$\epsilon_i^{est} = 0.1\epsilon_i$} & \multicolumn{2}{|c}{$\epsilon_i^{est} = 10\epsilon_i$}\\
\hline
problem & iter.  & {\small$\min_k\{\|x_k-x^\ast\|\}$} & iter.  & {\small$\min_k\{\|x_k-x^\ast\|\}$} & iter. & {\small$\min_k\{\|x_k-x^\ast\|\}$} \\
\hline
HS7 & 51 (opt) & 2.6752E-2 & 556 (ls) & 2.5682E-4 &  5 (opt) & 4.2796E-1 \\
BT11 & 20 (opt) & 6.6650E-1 & 3233 (ls) & 6.8738E-3 & 2 (opt) & 2.4045 \\
HS40 & 210 (opt) & 5.82E-2 &  982 (ls) & 1.4785E-2 &  0 (opt) & 2.8877E-1 \\
\hline
\end{tabular}\label{estimation1}
\end{table}

As expected, underestimations cause line search failures while overestimations cause \eqref{optterm} to be triggered at earlier iterations.  Another consequence of underestimating $\epsilon_2$ is that the algorithm might never be able to satisfy \eqref{optterm}, even if a line search failure occurs sufficiently late in the run; see for example the entry corresponding to   $\epsilon_i=10^{-1}, \epsilon_i^{est}=0.1\epsilon_i$. 
In summary, over-and underestimation of the noise levels can be harmful in ways that are dependent on the implementation.

We must point out that an optimization algorithm may provide an indication that the noise estimates must be re-computed. For example, the recovery procedure described by Berahas et al. \cite{berahas2019derivative} uses information from the line search to request a better estimate (e.g. through sampling or finite difference tables), and can take precautions to avoid harmful iterations. Robust implementations of methods for constrained optimization in the presence of noise should include such features.   




\section{Final Remarks}

Two questions guided this research. What is the best behavior one can expect of a constrained optimization method when functions and constraints contain a moderate amount of bounded noise that cannot be diminished at will? What are the minimal modifications of a classical optimization algorithm that allow it to tolerate noise, when the noise level can be estimated?

In this paper, we focused on a classical sequential quadratic programming method applied to equality constrained problems. We showed that a modification (relaxation) of the line search allows the iterates to approach a region around the solution where noise dominates---and that the iterates remain in a vicinity of this region, under normal circumstances. The analysis is presented under  benign assumptions, for example that the Jacobian of the constraints is never close to singular, which facilitates the choice of the penalty parameter. Nevertheless, we believe that the essence of the analysis captures some of the main challenges to be confronted when functions and derivatives contain noise. The accuracy bounds presented in this paper will be sharpened in a forthcoming paper that studies the local behavior of the method near a well behaved minimizer.

The thorny issue of how to design a proper stop test that reflects the desires of the users has not been addressed in this paper and is worthy of research. The treatment of singularity and the use of a nondiagonal Hessian also requires attention, as well as the very important question of how to handle noisy inequality constraints.

\bigskip\noindent
{\em Acknowledgement}. We thank Shigeng Sun for his careful reading of the paper and useful suggestions.


\newpage

\bibliographystyle{plain}

\bibliography{../References/references}
\end{document}